\newtheorem{theorem}{Theorem}[section]
\newtheorem{lemma}[theorem]{Lemma}
\newtheorem{corollary}[theorem]{Corollary}
\newtheorem{question}[theorem]{Question}
\theoremstyle{definition}
\newtheorem{definition}[theorem]{Definition}
\newtheorem{proposition}[theorem]{Proposition}
\newtheorem{example}[theorem]{Example}
\begin{document}
\title[Some topological properties of spaces between the Sorgenfrey and usual topologies on real number]
{Some topological properties of spaces between the Sorgenfrey and usual topologies on real number}

\author{Fucai Lin}
  \address{(Fucai Lin): 1. School of mathematics and statistics,
  Minnan Normal University, Zhangzhou 363000, P. R. China; 2. Fujian Key Laboratory of Granular Computing and Applications,
Minnan Normal University, Zhangzhou 363000, P. R. China}
  \email{linfucai2008@aliyun.com; linfucai@mnnu.edu.cn}

\author{Jiada Li}
\address{(Jiada Li): School of mathematics and statistics,
  Minnan Normal University, Zhangzhou 363000, P. R. China}
\email{840423797@qq.com}

\thanks{This work is supported by the Key Program of the Natural Science Foundation of Fujian Province (No: 2020J02043), the National
Natural Science Foundation of China (Grant No. 11571158), the Institute of Meteorological Big Data-Digital Fujian and Fujian Key Laboratory of Data Science and Statistics.}

\keywords{Sorgenfrey line; $H$-space; zero-dimension; local compactness; $\sigma$-compactness; $k_{\omega}$-property; perfectly subparacompact; quasi-metrizable.}
\subjclass[2010]{primary 54A10; secondary 54B99}

\begin{abstract}
The $H$-space, denoted as $(\mathbb{R}, \tau_{A})$, has $\mathbb{R}$ as its point set and a basis consisting of usual open interval neighborhood at points of $A$ while taking Sorgenfrey neighborhoods at points of $\mathbb{R}$-$A$. In this paper, we mainly discuss some topological properties of $H$-spaces. In particular, we prove that, for any subset $A\subset \mathbb{R}$,

(1) $(\mathbb{R}, \tau_{A})$ is zero-dimensional iff $\mathbb{R}\setminus A$ is dense in $(\mathbb{R}, \tau_{E})$;

(2) $(\mathbb{R}, \tau_{A})$ is locally compact iff $(\mathbb{R}, \tau_{A})$ is a $k_{\omega}$-space;

(3) if $(\mathbb{R}, \tau_{A})$ is $\sigma$-compact, then $\mathbb{R}\setminus A$ is countable and nowhere dense; if $\mathbb{R}\setminus A$ is countable and scattered, then $(\mathbb{R}, \tau_{A})$ is $\sigma$-compact;

(4) $(\mathbb{R}, \tau_{A})^{\aleph_{0}}$ is perfectly subparacompact;

(5) there exists a subset $A\subset\mathbb{R}$ such that $(\mathbb{R}, \tau_{A})$ is not quasi-metrizable;

(6) $(\mathbb{R}, \tau_{A})$ is metrizable if and only if $(\mathbb{R}, \tau_{A})$ is a $\beta$-space.
\end{abstract}

\maketitle
\section{Introduction}
The usual, metric topology on $\mathbb{R}$ is a topological space which coarser than the Sorgenfrey line $\mathbb{S}$, which is a well known space and has been studied extensively. It is well known that Sorgenfrey line has a basis consisting of all half-open intervals of the form $[a, b)$, where $a< b$.
The class of $H$-spaces mentioned in \cite{Chatyrko} is a space between usual topology of real number $\mathbb{R}$ and topology of Sorgenfrey line $\mathbb{S}$, which was described by Hattori in \cite{Braverman}. The $H$-space, denoted as $(\mathbb{R}, \tau_{A})$, has $\mathbb{R}$ as its point set and a basis consisting of usual open interval neighborhood at points of $A$ while taking Sorgenfrey neighborhoods at points of $\mathbb{R}-A$, that is, the topology $\tau_{A}$ is defined as follows:

\smallskip
(1) For each $x\in A$, $\{(x-\varepsilon, x+\varepsilon): \varepsilon>0\}$ is the neighborhood base at $x$, and

\smallskip
(2) for each $x\in \mathbb{R}\setminus A$, $\{[x, x+\varepsilon): \varepsilon>0\}$ is the neighborhood base at $x$.\\
\smallskip
Chatyrko and Hattori first began to study the properties of such spaces, where many interesting results were
obtained, see \cite{Chatyrko1} and \cite{Chatyrko2}. In particular, for any $A\subset\mathbb{R}$, $(\mathbb{R}, \tau_{A})$ is regular, hereditarily
Lindel\"{o}of, hereditarily separable and Baire space. Moreover, for any closed subset $A$ of $\mathbb{R}$,
they proved that $(\mathbb{R}, \tau_{A})$ is homeomorphic to the Sorgenfrey line $\mathbb{S}$ if and only if $A$ is countable.
In 2017, Kulesza in \cite{Chatyrko} made an improvement and a summary on the basis of Chatyrko and Hattori's work, and the author called this kind of spaces as $H$-space, and demonstrated the properties of $H$-space with respect to homeomorphism, functions, completeness and reversibility. In particular, Kulesza proved that $(\mathbb{R}, \tau_{A})$ is homeomorphic to $\mathbb{S}$ if and only if $A$ is scattered, and $(\mathbb{R}, \tau_{A})$  is complete if and only if $\mathbb{R}\setminus A$ is countable, which implies that $(\mathbb{R}, \tau_{\mathbb{P}})$ is complete. Moreover, Bouziad  and Sukhacheva in \cite{BS} gave some characterizations of some topological properties of $(\mathbb{R}, \tau_{A})$, such as each compact subspace being countable, locally compactness and so on. In this paper, we continue the work of Chatyrko and Hattori by proving additional information about the spaces $(\mathbb{R}, \tau_{A})$. The remaining of this paper is organized as follows.

Section 2 is dedicated to outline some concepts and terminologies. In Section 3, we mainly discuss some topological properties of $H$-spaces, such as zero-dimension, $\sigma$-compactness, $k_{\omega}$-property, perfect property, quasi-metrizability and so on. In particular, we give the characterizations of $A$ or $\mathbb{R}\setminus A$ such that $(\mathbb{R}, \tau_{A})$ has topological properties of zero-dimension, $\sigma$-compactness, and $k_{\omega}$-property respectively. Moreover, we show that $(\mathbb{R}, \tau_{A})^{\aleph_{0}}$ is perfectly subparacompact. Further, we discuss some generalized metric properties of $(\mathbb{R}, \tau_{A})$, and prove that there exists a subset $A\subset \mathbb{R}$ such that $(\mathbb{R}, \tau_{A})$ is not quasi-metrizable. In Section 4, we pose some interesting questions about $H$-spaces.

\maketitle
\section{Preliminaries}
 In this section, we introduce the necessary notation and terminologies.
  First of all, let $\mathbb{N}$, $\mathbb{Z}$ and $\mathbb{R}$ denote the sets of all positive
  integers, all integers and all real numbers, respectively. For undefined
  terminologies, the reader may refer to \cite{Chatyrko3} and \cite{Gr}.

\begin{definition}\cite{Chatyrko3} Let $X$ be a topological space.

\smallskip
(1) $X$ is called {\it zero-dimensional} if it has a base of sets that are at the same time open and closed in it.

\smallskip
(2) $X$ is called a {\it Baire space} if every intersection of a countable collection of open dense sets in $X$ is also dense in $X$.

\smallskip
(3) $X$ is called {\it locally compact}, if every point $x$ of $X$ has a compact neighbourhood, i.e., there exists an open set $U$ and a compact set $K$, such that $x\in U\subseteq K$.

\smallskip
(4) $X$ is called a {\it $k_\omega$-space} if there exists a family of countably many compact subsets $\{K_{n}: n\in\mathbb{N}\}$ of $X$ such that each subset $F$ of $X$ is closed in $X$ provided that $F\cap K_n$ is closed in $K_n$ for each $n\in\mathbb{N}$.

\smallskip
(5) $X$ is {\it $\sigma$-compact} if it is the union of countably many compact subsets of $X$.

\smallskip
(6) $X$ is {\it Lindel\"{o}f} if  every open cover of $X$ has a countable subcover.

\smallskip Clearly, each $k_{\omega}$-space is $\sigma$-compact and each $\sigma$-compact is Lindel\"{o}f.
\end{definition}

\begin{definition}\cite{Chatyrko3, Gr}
(1) A space $X$ is {\it subparacompact} if each open cover of $X$ has a $\sigma$-locally finite closed refinement.

\smallskip
(2) A space $X$ is {\it perfect} if each closed subset of $X$ is a $G_{\delta}$ in $X$.

\smallskip
(3) A space $X$ is {\it perfectly subparacompact} if it is perfect and subparacompact.

\smallskip
(4) A space $X$ is {\it weakly $\theta$-refinable} if for each open cover $\mathscr{U}$ of $X$, there exists an open cover $\mathscr{V}=\bigcup_{n=1}^{\infty}\mathscr{V}(n)$ of $X$ which is refines $\mathscr{U}$ and which has the property that if $x\in X$, then there exists an $n\in\mathbb{N}$ such that $x$ belongs to exactly $k$ members of $\mathscr{V}(n)$ for some $k\in\mathbb{N}$.

\smallskip
(5) A family $\mathscr{U}$ of open sets in $X$ is called {\it interior-preserving} if for $\mathscr{F}\subset \mathscr{U}$ and $y\in\bigcap\mathscr{F}$, $\bigcap\mathscr{F}$ is an open neighborhood of $y$.
\end{definition}

\begin{definition}\cite{Chatyrko3}
A family $\mathscr{P}$ of subsets of a space $X$ is a {\it network} for $X$ if for each point $x\in X$ and any neighbhorhood $U$ of $x$ there is an $P\in\mathscr{P}$ such that $x\in P\subset U$. The {\it network weight} of a space $X$ is defined as the smallest cardinal number of the form $|\mathscr{P}|$, where $\mathscr{P}$ is a network for $X$, this cardinal number is denoted by $nw(X)$.
\end{definition}

\begin{definition}\cite{Gr}
Recall that $(X, \tau)$ is a {\it $\beta$-space} if there exists a function $g: \omega\times X\rightarrow \tau$
such that if $x\in g(n, x_{n})$ for every $n\in\omega$ then the sequence $\{x_{n}\}$ has a cluster point in $X$.
\end{definition}

\begin{definition}\cite{Chatyrko2}
Let $A$ be a subset of $\mathbb{R}$ of the real number. Defined the topology $\tau_{A}$ on $\mathbb{R}$ as follows:

\smallskip
(1)\, For each $x\in A, \{(x-\epsilon, x+\epsilon):\epsilon>0\}$ is the neighborhood base at $x$,

\smallskip
(2)\, For each $x\in \mathbb{R}- A, \{[x, x+\epsilon):\epsilon>0\}$ is the neighborhood base at $x$.

\smallskip
Then $(\mathbb{R}, \tau_{A})$ is called {\it $H$-space}. The point $x$ is called an {\it $\mathbb{R}$-point}, if $x\in A$, otherwise, $x$ is called an {\it $\mathbb{S}$-point}.
\end{definition}

Let $\tau_{E}$ and $\tau_{S}$ denote the usual (Euclidean) topology of $\mathbb{R}$ and the topology of the Sorgenfrey line $\mathbb{S}$ respectively.
It is clear that $\tau_{A}=\tau_{E}$ if $A=\mathbb{R}$ and  $\tau_{A}=\tau_{S}$ if $A=\emptyset$. And it is also obvious that $\tau_{E}\subset\tau_{A}\subset\tau_{S}$.

Some topological properties of $(\mathbb{R}, \tau_{E})$ and $(\mathbb{R}, \tau_{S})$ is as the table below:

\begin{table}[!hbt]
  \centering
  \textbf{Some topological properties of $(\mathbb{R}, \tau_{E})$ and $(\mathbb{R}, \tau_{S})$}

  \ \

  \begin{tabular}{|c|c|c|c|}
      \hline
      Number & Property & $(\mathbb{R}, \tau_{E})$ & $(\mathbb{R}, \tau_{S})$\\
      \hline
      1 & metrizable & Yes & No  \\
      \hline
      2 & Hereditarily Separable & Yes & Yes   \\
      \hline
      3 & Normality & Yes & Yes  \\
      \hline
      4 & Lindel\"{o}ff  & Yes  & Yes   \\
      \hline
      5 & Measurable & Yes & No \\
      \hline
      6 & Baire Space & Yes & Yes   \\
      \hline
      7 & Zero-dimension & No  & Yes  \\
      \hline
      8 & Compactness & No & No \\
      \hline
      9 & Countably Compact & No & No  \\
      \hline
      10 & Local Compactness & Yes & No  \\
      \hline
      11 & Sequential Compactness & No & No  \\
      \hline
      12 & Paracompactness & Yes  & Yes  \\
      \hline
      13 & $\sigma$-Compactness & Yes & No  \\
      \hline
      14 & Connectedness & Yes & No  \\
      \hline
      15 & Path Connectedness & Yes & No  \\
      \hline
      16 & Local Connectedness & Yes & No  \\
      \hline
      17 & Every compact subset is countable & No & Yes  \\
      \hline
  \end{tabular}
  \end{table}

  By the table, it is easy to see that, for every subset $A$ of real number, the $H$-space is always a hereditarily separable, paracompact, Lindel\"{o}ff, normal and first-countable space. And we can also know that, the $H$-space is always not a compact, countably compact or sequentially compact space for any subset $A$ of $\mathbb{R}$. According to the \cite[Proposition 2.3]{Chatyrko2}, $H$-space $(\mathbb{R}, \tau_{A})$ is second-countable if and only if $\mathbb{R}-A$ is countable.

\maketitle
\section{main results}
In this section, we mainly discuss some topological properties of $H$-spaces, such as zero-dimension, $\sigma$-compactness, $k_{\omega}$-property, perfect property, quasi-metrizability and so on. we First, we give an obvious lemma.

\begin{lemma}\label{l0}
Let $D$ be a dense subset of $(\mathbb{R}, \tau_{A})$. Then $D$ is dense in $(\mathbb{R}, \tau_{E})$ and $(\mathbb{R}, \tau_{S})$.
\end{lemma}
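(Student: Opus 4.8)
The plan is to exploit the chain of inclusions $\tau_{E}\subset\tau_{A}\subset\tau_{S}$ recorded just before the lemma. Recall that $D$ being dense in $(\mathbb{R},\tau_{A})$ means precisely that $D$ meets every nonempty $\tau_{A}$-open set. So to obtain density of $D$ in the other two topologies it suffices to show, respectively, that every nonempty $\tau_{E}$-open set and every nonempty $\tau_{S}$-open set contains a nonempty $\tau_{A}$-open subset.

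For the Euclidean topology this is immediate from $\tau_{E}\subset\tau_{A}$: a nonempty $\tau_{E}$-open set is already $\tau_{A}$-open, hence meets $D$. Thus $D$ is dense in $(\mathbb{R},\tau_{E})$.

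For the Sorgenfrey topology the only thing requiring a word is that $\tau_{S}$ is \emph{finer} than $\tau_{A}$, so a $\tau_{S}$-open set need not be $\tau_{A}$-open and density does not transfer for free. It suffices to test membership of $D$ against the basic Sorgenfrey neighborhoods $[x,x+\epsilon)$. Each such set contains the nonempty Euclidean open interval $(x,x+\epsilon)$, which by the previous paragraph meets $D$; hence $[x,x+\epsilon)\cap D\neq\emptyset$. Since every nonempty $\tau_{S}$-open set contains some basic set of this form, $D$ is dense in $(\mathbb{R},\tau_{S})$.

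I do not anticipate any genuine obstacle: the statement is a formal consequence of the inclusions $\tau_{E}\subset\tau_{A}\subset\tau_{S}$, the sole minor subtlety being the passage to the finer topology $\tau_{S}$, which is dispatched by the elementary remark that each Sorgenfrey basic neighborhood contains a Euclidean open interval.
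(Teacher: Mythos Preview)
Your proof is correct and follows essentially the same approach as the paper: density in $\tau_{E}$ comes directly from $\tau_{E}\subset\tau_{A}$, and density in $\tau_{S}$ comes from the observation that every nonempty $\tau_{S}$-open set contains a nonempty $\tau_{E}$-open set. Your version is slightly more explicit in working with basic Sorgenfrey sets $[x,x+\epsilon)\supset(x,x+\epsilon)$, whereas the paper phrases this for an arbitrary nonempty $\tau_{S}$-open set, but the idea is identical.
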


\begin{proof}
Obviously, $D$ is a dense subset of $(\mathbb{R}, \tau_{E})$ since $\tau_{A}$ is finer than $\tau_{E}$. In order to prove $D$ being dense in $(\mathbb{R}, \tau_{S})$, take an arbitrary non-empty open subset $U$ in $\tau_{S}$, then there exists a non-empty open subset $V$ in $\tau_{E}$, hence $\emptyset\neq V\cap D\subset U\cap D$. Therefore, $D$ is also dense in $(\mathbb{R}, \tau_{S})$.
\end{proof}

By Lemma~\ref{l0}, we have the following corollary.

\begin{corollary}
For an arbitrary subset $A$ of $\mathbb{R}$, we have $d(\mathbb{R}, \tau_{A})=d(\mathbb{R}, \tau_{E})=d(\mathbb{R}, \tau_{S})=\omega$.
\end{corollary}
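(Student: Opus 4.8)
The plan is to combine Lemma~\ref{l0} with the standard fact that $\mathbb{Q}$ is dense in both $(\mathbb{R},\tau_{E})$ and $(\mathbb{R},\tau_{S})$. First I would record the elementary monotonicity principle: if a topology $\tau'$ on a set $X$ is finer than $\tau$, then every $\tau'$-dense subset is also $\tau$-dense, and hence $d(X,\tau)\le d(X,\tau')$ whenever $\tau\subseteq\tau'$. Applying this to the chain $\tau_{E}\subseteq\tau_{A}\subseteq\tau_{S}$ immediately gives $d(\mathbb{R},\tau_{E})\le d(\mathbb{R},\tau_{A})\le d(\mathbb{R},\tau_{S})$.

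Next I would observe that $\mathbb{Q}$ is dense in the Sorgenfrey line $(\mathbb{R},\tau_{S})$, since every basic neighborhood $[x,x+\epsilon)$ meets $\mathbb{Q}$; thus $d(\mathbb{R},\tau_{S})\le\omega$, while $d(\mathbb{R},\tau_{E})\le\omega$ is the classical separability of the real line. On the other hand, each of these spaces is $T_{1}$ with infinitely many points, so no finite subset can be dense, giving $d(\mathbb{R},\tau_{E})\ge\omega$. Squeezing the displayed inequalities then forces all three densities to equal $\omega$. Equivalently, one may note directly that $\mathbb{Q}$ is dense in $(\mathbb{R},\tau_{A})$ because $\tau_{A}\subseteq\tau_{S}$ and $\mathbb{Q}$ is $\tau_{S}$-dense, and then Lemma~\ref{l0} shows this single countable set $\mathbb{Q}$ witnesses the separability of $(\mathbb{R},\tau_{A})$, $(\mathbb{R},\tau_{E})$ and $(\mathbb{R},\tau_{S})$ simultaneously.

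There is no genuine obstacle in this argument; the only points needing a word of care are the correct direction of the monotonicity of density under refinement of the topology, and the trivial lower bound $d\ge\omega$, which comes from the $T_{1}$ separation axiom together with $\mathbb{R}$ being infinite.
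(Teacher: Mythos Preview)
Your argument is correct and matches the paper's intended approach: both rely on the chain $\tau_{E}\subseteq\tau_{A}\subseteq\tau_{S}$ together with the separability of the Sorgenfrey line, and the paper simply cites Lemma~\ref{l0} without further detail. Your squeezing argument is in fact slightly more economical, since it uses only the easy monotonicity direction and the classical fact $d(\mathbb{R},\tau_{S})=\omega$, so the nontrivial implication of Lemma~\ref{l0} is not strictly needed here.
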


Since $(\mathbb{R}, \tau_{E})$ and $(\mathbb{R}, \tau_{S})$ are all Baire, we have the following corollary.

\begin{corollary}
For an arbitrary subset $A$ of $\mathbb{R}$, the $H$-space $(\mathbb{R}, \tau_{A})$ is a Baire space.
\end{corollary}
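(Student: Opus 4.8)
The subtle point here is that being a Baire space is \emph{not} monotone under refinement of the topology, so one cannot simply invoke $\tau_{E}\subset\tau_{A}$ together with the fact that $(\mathbb{R},\tau_{E})$ is Baire. What makes the statement true is the following observation: every nonempty $\tau_{A}$-open set contains a nonempty $\tau_{E}$-open set. Indeed, such a set contains a basic neighborhood, which is either an interval $(a,b)$ or a half-open interval $[x,x+\epsilon)$, and in both cases its Euclidean interior is nonempty (namely $(a,b)$, respectively $(x,x+\epsilon)$). Equivalently, the family of nonempty bounded open intervals is a $\pi$-base simultaneously for $(\mathbb{R},\tau_{E})$ and for $(\mathbb{R},\tau_{A})$, and one then exploits that Baire-ness only depends on a $\pi$-base.

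Concretely, the plan is as follows. Let $\{U_{n}:n\in\mathbb{N}\}$ be a countable family of $\tau_{A}$-open, $\tau_{A}$-dense subsets of $\mathbb{R}$; the goal is to show $\bigcap_{n}U_{n}$ is $\tau_{A}$-dense. First I would pass to Euclidean interiors, setting $V_{n}=\mathrm{int}_{E}(U_{n})$, where $\mathrm{int}_{E}$ denotes interior in $(\mathbb{R},\tau_{E})$. Each $V_{n}$ is $\tau_{E}$-open, and I claim it is $\tau_{E}$-dense: given a nonempty $O\in\tau_{E}$, since $\tau_{E}\subset\tau_{A}$ and $U_{n}$ is $\tau_{A}$-dense, the set $O\cap U_{n}$ is nonempty and $\tau_{A}$-open, hence contains a basic neighborhood $B$; then $\mathrm{int}_{E}(B)$ is a nonempty $\tau_{E}$-open subset of $U_{n}$, so $\mathrm{int}_{E}(B)\subseteq V_{n}$, and since also $\mathrm{int}_{E}(B)\subseteq O$ we get $O\cap V_{n}\neq\emptyset$. (Alternatively, Lemma~\ref{l0} already gives that $U_{n}$ is $\tau_{E}$-dense, and one checks directly that $\mathrm{int}_{E}(U_{n})$ is $\tau_{E}$-dense in $U_{n}$.)

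Since $(\mathbb{R},\tau_{E})$ is a Baire space, $\bigcap_{n}V_{n}$ is $\tau_{E}$-dense in $\mathbb{R}$. It remains to transfer this conclusion back to $\tau_{A}$: given a nonempty $W\in\tau_{A}$, pick a basic neighborhood $B\subseteq W$; then $\mathrm{int}_{E}(B)$ is nonempty and $\tau_{E}$-open, so $\mathrm{int}_{E}(B)\cap\bigcap_{n}V_{n}\neq\emptyset$, and since $\bigcap_{n}V_{n}\subseteq\bigcap_{n}U_{n}$ and $\mathrm{int}_{E}(B)\subseteq W$, this yields $W\cap\bigcap_{n}U_{n}\neq\emptyset$. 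Hence $\bigcap_{n}U_{n}$ is $\tau_{A}$-dense, so $(\mathbb{R},\tau_{A})$ is a Baire space.

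The only genuine obstacle is conceptual rather than computational: one must resist arguing by monotonicity in the topology and instead use the common $\pi$-base of Euclidean intervals to push dense open sets from $\tau_{A}$ down to $\tau_{E}$ (via Euclidean interiors) and then pull the resulting dense $G_{\delta}$ back up to $\tau_{A}$. All the verifications involved are routine. The same scheme works verbatim with $\tau_{S}$ in place of $\tau_{E}$ using that $(\mathbb{R},\tau_{S})$ is Baire, but $\tau_{E}$ is the most convenient choice.
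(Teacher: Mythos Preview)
Your argument is correct. The paper itself gives no detailed proof, only the one-line remark ``Since $(\mathbb{R},\tau_{E})$ and $(\mathbb{R},\tau_{S})$ are all Baire,'' placed immediately after Lemma~\ref{l0}. The most natural way to unpack that hint, however, is through $\tau_{S}$ rather than $\tau_{E}$: because $\tau_{A}\subset\tau_{S}$, every $\tau_{A}$-open set is already $\tau_{S}$-open, and by Lemma~\ref{l0} every $\tau_{A}$-dense set is $\tau_{S}$-dense; so a countable family of $\tau_{A}$-open dense sets is literally a family of $\tau_{S}$-open dense sets, Baire in $(\mathbb{R},\tau_{S})$ gives a $\tau_{S}$-dense intersection, and $\tau_{S}$-dense trivially implies $\tau_{A}$-dense since $\tau_{A}\subset\tau_{S}$. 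No passage to interiors is needed. Your route via $\tau_{E}$ works fine but requires the extra step of replacing $U_{n}$ by $\mathrm{int}_{E}(U_{n})$ precisely because $\tau_{A}$-open sets need not be $\tau_{E}$-open; so your closing remark that ``$\tau_{E}$ is the most convenient choice'' is backwards---the $\tau_{S}$ argument is the shorter one.
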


\begin{proposition}
For an arbitrary subset $A$ of $\mathbb{R}$, the $H$-space $(\mathbb{R}, \tau_{A})$ is homeomorphic to $(\mathbb{R}, \tau_{E})$ if and only if $A=\mathbb{R}$.
\end{proposition}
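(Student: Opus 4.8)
The plan is to prove the contrapositive of the nontrivial direction: if $A \neq \mathbb{R}$, then $(\mathbb{R}, \tau_A)$ is not homeomorphic to $(\mathbb{R}, \tau_E)$. The natural strategy is to isolate a topological property that $(\mathbb{R}, \tau_E)$ possesses but $(\mathbb{R}, \tau_A)$ lacks whenever there is at least one $\mathbb{S}$-point. The cleanest candidate is connectedness: $(\mathbb{R}, \tau_E)$ is connected, so it suffices to show that the presence of a single $\mathbb{S}$-point destroys connectedness. Indeed, if $x \in \mathbb{R} \setminus A$, then $[x, +\infty)$ is open in $\tau_A$ (it is a union of basic Sorgenfrey neighborhoods at $\mathbb{S}$-points together with Euclidean neighborhoods at $\mathbb{R}$-points lying to the right of $x$), and $(-\infty, x)$ is open in $\tau_E \subseteq \tau_A$; these two sets partition $\mathbb{R}$ into nonempty disjoint open pieces, so $(\mathbb{R}, \tau_A)$ is disconnected. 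The converse direction is immediate: if $A = \mathbb{R}$ then $\tau_A = \tau_E$, so the identity map is a homeomorphism.

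First I would record the trivial direction ($A = \mathbb{R} \Rightarrow \tau_A = \tau_E$), which follows directly from the remark in the Preliminaries that $\tau_A = \tau_E$ when $A = \mathbb{R}$. Next I would carry out the disconnectedness argument above: pick $x \in \mathbb{R}\setminus A$, exhibit the clopen set $[x, +\infty)$ in $\tau_A$, and conclude. The one point requiring a line of care is verifying that $[x,+\infty)$ is genuinely open in $\tau_A$: for a point $y \in (x, +\infty)$ that is an $\mathbb{R}$-point, a small symmetric Euclidean interval around $y$ still lies in $[x,+\infty)$ as long as it is small enough to avoid $x$, and for an $\mathbb{S}$-point $y \geq x$ the basic neighborhood $[y, y+\epsilon)$ is contained in $[x,+\infty)$ automatically; and of course $x$ itself is an $\mathbb{S}$-point with $[x, x+\epsilon) \subseteq [x,+\infty)$. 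So $[x,+\infty)$ is open, and since its complement $(-\infty,x)$ is Euclidean-open hence $\tau_A$-open, it is clopen and nontrivial.

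There is essentially no obstacle here; the proposition is a soft corollary of the fact that any $\mathbb{S}$-point yields a clopen half-line. The only thing to be slightly attentive to is not to overclaim — one should cite connectedness of $(\mathbb{R},\tau_E)$ (entry 14 of the table) as the invariant being used, since homeomorphic spaces are either both connected or both disconnected. An alternative and equally short route, if one prefers to avoid connectedness, is to invoke zero-dimensionality together with item (1) of the paper's main results, or to note that $(\mathbb{R},\tau_E)$ is not zero-dimensional while the existence of a clopen half-line is a first step toward zero-dimensionality of $(\mathbb{R},\tau_A)$; but the connectedness argument is the most self-contained and I would present that one.
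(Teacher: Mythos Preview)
Your proof is correct and follows essentially the same route as the paper: both arguments pick a point $x\in\mathbb{R}\setminus A$, observe that $(-\infty,x)$ and $[x,+\infty)$ form a nontrivial clopen partition of $(\mathbb{R},\tau_A)$, and conclude by invoking the connectedness of $(\mathbb{R},\tau_E)$. The only difference is cosmetic---you verify openness of $[x,+\infty)$ in more detail than the paper does.
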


\begin{proof}
Assume that $(\mathbb{R}, \tau_{A})$ is homeomorphic to $(\mathbb{R}, \tau_{E})$ and $A\neq\mathbb{R}$. Hence $\mathbb{R}\setminus A\neq\emptyset$. Take an arbitrary point $a\in \mathbb{R}\setminus A$. Then $(-\infty, a)$ is an open and closed subset in $(\mathbb{R}, \tau_{A})$. Hence $(\mathbb{R}, \tau_{A})$ is not connected. However, $(\mathbb{R}, \tau_{E})$ is connected. Hence $A=\mathbb{R}$.
\end{proof}

Now, we can prove one of main results of this paper, which gives a characterization of subset $\mathbb{R}\setminus A$ such that $(\mathbb{R}, \tau_{A})$ is zero-dimensional.

\begin{theorem}
For an arbitrary subset $A$ of $\mathbb{R}$, the $H$-space $(\mathbb{R}, \tau_{A})$ is zero-dimensional if and only if $\mathbb{R}\setminus A$ is dense in $(\mathbb{R}, \tau_{E})$.
\end{theorem}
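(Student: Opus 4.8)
The plan is to prove both directions by directly examining when the canonical clopen base of $(\mathbb{R},\tau_A)$ separates points. First I would establish the easy structural fact that for any $a\in\mathbb{R}\setminus A$ and any $b>a$ the half-open interval $[a,b)$ is clopen in $(\mathbb{R},\tau_A)$: it is open because at each of its points (whether an $\mathbb{R}$-point or an $\mathbb{S}$-point) it contains a basic neighborhood, and its complement $(-\infty,a)\cup[b,\infty)$ is open for the same reason, using that $a\notin A$ gives the Sorgenfrey neighborhood $[a,b)$ on the inside while $b$ — if $b\notin A$ — has a Sorgenfrey neighborhood going up, and if $b\in A$ the Euclidean neighborhood works. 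More generally, unions of such intervals with endpoints in $\mathbb{R}\setminus A$ are clopen.

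For the direction ``$\mathbb{R}\setminus A$ dense in $(\mathbb{R},\tau_E)$ $\Rightarrow$ zero-dimensional'': given a point $x$ and a basic $\tau_A$-neighborhood $U$ of $x$, I would produce a clopen neighborhood of $x$ inside $U$. If $x\in A$, then $U\supseteq(x-\epsilon,x+\epsilon)$ for some $\epsilon$; using density of $\mathbb{R}\setminus A$ in the Euclidean topology, pick $a\in(\mathbb{R}\setminus A)\cap(x-\epsilon,x)$ and $b\in(\mathbb{R}\setminus A)\cap(x,x+\epsilon)$, and then $[a,b)$ is a clopen set with $x\in[a,b)\subseteq U$. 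If $x\in\mathbb{R}\setminus A$, then $U\supseteq[x,x+\epsilon)$; pick $b\in(\mathbb{R}\setminus A)\cap(x,x+\epsilon)$ and use $[x,b)$, which is clopen and contains $x$. This yields a clopen base, so the space is zero-dimensional.

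For the converse, I would argue contrapositively: suppose $\mathbb{R}\setminus A$ is not dense in $(\mathbb{R},\tau_E)$, so there is a nonempty Euclidean-open interval $(p,q)\subseteq A$. Then on $(p,q)$ the subspace topology inherited from $\tau_A$ coincides with the Euclidean topology (every point of $(p,q)$ is an $\mathbb{R}$-point), so $(p,q)$ with the subspace topology is a connected subspace with more than one point. Pick any $x\in(p,q)$ and the $\tau_A$-neighborhood $(p,q)$ of $x$; I would show no clopen subset of $(\mathbb{R},\tau_A)$ can sit between $\{x\}$ and $(p,q)$, because its intersection with the connected set $(p,q)$ would be a nonempty proper clopen subset of $(p,q)$ — unless it equals all of $(p,q)$, but a clopen set of $(\mathbb{R},\tau_A)$ containing $(p,q)$ need not be contained in $(p,q)$; the cleanest route is to note that any clopen $C$ with $x\in C$ must contain a whole Euclidean neighborhood of $x$ (since $(p,q)$ is Euclidean-open in the space and $C\cap(p,q)$ is clopen in the connected set $(p,q)$, forcing $C\cap(p,q)=(p,q)$). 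Hence no clopen set is contained in a small $\tau_A$-neighborhood of $x$, so $(\mathbb{R},\tau_A)$ has no clopen base at $x$ and is not zero-dimensional.

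The main obstacle is the bookkeeping in the converse: one must be careful that a clopen set of the whole space, when intersected with the interval $(p,q)\subseteq A$, is genuinely clopen in the \emph{Euclidean} subspace topology on $(p,q)$, and then invoke connectedness of $(p,q)$ correctly to conclude it swallows all of $(p,q)$ and therefore cannot be squeezed inside a proper subinterval neighborhood of $x$; the forward direction is essentially routine once the clopenness of intervals $[a,b)$ with $a,b\in\mathbb{R}\setminus A$ is recorded.
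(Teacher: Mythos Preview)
Your approach matches the paper's almost exactly: for sufficiency you build clopen half-open intervals with endpoints in $\mathbb{R}\setminus A$ to form a base at each point (the paper does this via sequences $\{y_n\},\{z_n\}\subset\mathbb{R}\setminus A$ converging to $x_0$), and for necessity you exploit that an interval $(p,q)\subseteq A$ carries the Euclidean subspace topology and is therefore connected, forcing any clopen set meeting it to swallow it. Your necessity argument is in fact slightly cleaner than the paper's, which asserts a bit loosely that the clopen $V\subset(c,d)$ is clopen in $(\mathbb{R},\tau_E)$ itself; working in the subspace, as you do, avoids that wrinkle.

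There is one slip in your preliminary lemma: the claim that $[a,b)$ is clopen for \emph{any} $b>a$ (with $a\in\mathbb{R}\setminus A$) is false when $b\in A$, since then every basic neighborhood of $b$ has the form $(b-\epsilon,b+\epsilon)$ and meets $[a,b)$, so $b$ lies in the closure and $[a,b)$ is not closed. Your sentence ``if $b\in A$ the Euclidean neighborhood works'' is exactly where this breaks. Fortunately, in the body of the argument you only ever choose $b\in\mathbb{R}\setminus A$, so the proof goes through once you restrict the lemma to $a,b\in\mathbb{R}\setminus A$.
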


\begin{proof}
Necessity. Let $(\mathbb{R}, \tau_{A})$ be zero-dimensional. Assume that $\mathbb{R}\setminus A$ is not dense in $(\mathbb{R}, \tau_{E})$, then it follows from Lemma~\ref{l0} that $\mathbb{R}\setminus A$ is also not dense in $(\mathbb{R}, \tau_{A})$. Then there exists an open subset $U$ in $(\mathbb{R}, \tau_{A})$ such that $U\cap (\mathbb{R}\setminus A)=\emptyset$. Hence $U\subset A$, which implies that there exists an open interval $(c, d)$ such that $(c, d)\subset U\subset A$. Since $(\mathbb{R}, \tau_{A})$ is zero-dimensional and $(c, d)\subset A$, it follows that $(c, d)$ contains an open and closed subset $V$. Since each neighborhood of each point of $A$ belongs to $\tau_{E}$, it is easy to see that $V$ is an open and closed subset in $(\mathbb{R}, \tau_{E})$. However, $(\mathbb{R}, \tau_{E})$ is not zero-dimensional, which is a contradiction.

Sufficiency. Let $\mathbb{R}\setminus A$ be dense in $(\mathbb{R}, \tau_{E})$. Take an arbitrary $x_{0}\in\mathbb{R}$. We divide into the proof into the following two cases.

\smallskip
{\bf Case 1:} $x_{0} \in A$.

\smallskip
By Lemma~\ref{l0}, there exist a strictly increasing sequence $\{y_{n}\}$ and a strictly decreasing sequence $\{z_{n}\}$ such that $\{y_{n}\}\subset \mathbb{R}\setminus A$, $\{z_{n}\}\subset \mathbb{R}\setminus A$ and two sequences all converge to $x_{0}$ in $(\mathbb{R}, \tau_{E})$. For any $n\in\mathbb{N}$, put $U_{n}=[y_{n}, z_{n})$. Clearly, each $U_{n}$ is an open and closed subset of $(\mathbb{R}, \tau_{A})$. However, it easily check that the family $\{U_{n}: n\in\mathbb{N}\}$ is a base at $x_{0}$ in $(\mathbb{R}, \tau_{A})$. Hence the point $x_{0}$ has a base consisting of open and closed subsets in $(\mathbb{R}, \tau_{A})$.

\smallskip
{\bf Case 2:} $x_{0} \not\in A$.

\smallskip
Obviously, there exists a strictly decreasing sequence $\{x_{n}\}$ such that $\{x_{n}\}\subset \mathbb{R}\setminus A$ and $\{x_{n}\}$ converges to $x_{0}$ in $(\mathbb{R}, \tau_{E})$. Then the family $\{[x_{0}, x_{n}): n\in\mathbb{N}\}$ is base consisting of open and closed subsets in $(\mathbb{R}, \tau_{A})$.

In a word, $(\mathbb{R}, \tau_{A})$ is zero-dimensional.
\end{proof}

Next we prove the second main result of this paper, which shows that local compactness is equivalent to $k_{\omega}$ property in $(\mathbb{R}, \tau_{A})$. Indeed, A. Bouziad and E. Sukhacheva in \cite{BS} has proved that, for an arbitrary subset $A$ of $\mathbb{R}$, we have $(\mathbb{R}, \tau_{A})$ is locally compact if and only if $\mathbb{R}\setminus A$ is closed in $\mathbb{R}$ and discrete in $\mathbb{S}$.
\begin{theorem}\label{t2}
For an arbitrary subset $A$ of $\mathbb{R}$, then the following statements are equivalent:
\begin{enumerate}
\item $(\mathbb{R}, \tau_{A})$ is locally compact;

\item $(\mathbb{R}, \tau_{A})$ is a $k_{\omega}$-space;

\item $\mathbb{R}\setminus A$ is discrete and closed in $(\mathbb{R}, \tau_{A})$.
\end{enumerate}
\end{theorem}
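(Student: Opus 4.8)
The plan is to derive $(1)\Leftrightarrow(3)$ from the Bouziad--Sukhacheva criterion quoted above, and to prove $(1)\Leftrightarrow(2)$ using only general facts about $k_{\omega}$-spaces together with the properties, recorded after the table, that every $H$-space is Lindel\"of, normal (hence Hausdorff) and first-countable. Write $B=\mathbb{R}\setminus A$. For $(1)\Leftrightarrow(3)$ I would first record the elementary translation that $B$ is closed and discrete in $(\mathbb{R},\tau_{A})$ if and only if $B$ is closed in $(\mathbb{R},\tau_{E})$ and discrete in $(\mathbb{R},\tau_{S})$. Indeed, $B$ is $\tau_{A}$-closed iff $A$ is $\tau_{A}$-open; since each point of $A$ is an $\mathbb{R}$-point and therefore has in $\tau_{A}$ exactly its Euclidean basic neighbourhoods, $A$ is $\tau_{A}$-open iff $A$ is $\tau_{E}$-open, i.e. iff $B$ is $\tau_{E}$-closed. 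Likewise, each $x\in B$ is an $\mathbb{S}$-point whose $\tau_{A}$-basic neighbourhoods are the sets $[x,x+\varepsilon)$, so $\{x\}$ is relatively open in the subspace $B$ of $(\mathbb{R},\tau_{A})$ iff $[x,x+\varepsilon)\cap B=\{x\}$ for some $\varepsilon>0$, which is exactly the condition that $B$ be a discrete subspace of $\mathbb{S}$. Combining these two observations with the Bouziad--Sukhacheva theorem gives $(1)\Leftrightarrow(3)$ at once.

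For $(1)\Rightarrow(2)$ I would invoke the standard fact that a locally compact, Lindel\"of, Hausdorff space is a $k_{\omega}$-space. Covering $\mathbb{R}$ by interiors of compact neighbourhoods and extracting a countable subcover, one builds inductively an increasing sequence $\{K_{n}\}$ of compact sets with $K_{n}\subseteq\mathrm{int}(K_{n+1})$ and $\bigcup_{n}K_{n}=\mathbb{R}$. If $F\cap K_{n}$ is closed in $K_{n}$ for every $n$ and $x\notin F$, pick $n$ with $x\in\mathrm{int}(K_{n})$; since compact subsets of a Hausdorff space are closed, $F\cap K_{n+1}$ is closed in $\mathbb{R}$, so $\mathrm{int}(K_{n})\setminus(F\cap K_{n+1})$ is a neighbourhood of $x$ disjoint from $F$. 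Hence $F$ is closed, and $\{K_{n}\}$ witnesses the $k_{\omega}$-property.

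For $(2)\Rightarrow(1)$ I would use first-countability essentially, by proving the general statement that a first-countable Hausdorff $k_{\omega}$-space is locally compact. Let $X=\bigcup_{n}K_{n}$ with $\{K_{n}\}$ increasing and compact (replace the witnessing family by its finite unions), and suppose some $x\in X$ has no compact neighbourhood. Fix $n_{0}$ with $x\in K_{n_{0}}$ and a decreasing sequence $\{U_{n}\}$ of open neighbourhoods of $x$ forming a base at $x$. For each $n\ge n_{0}$ we must have $U_{n}\not\subseteq K_{n}$, since otherwise $\overline{U_{n}}$ would be a closed subset of the compact set $K_{n}$ and hence a compact neighbourhood of $x$; so choose $p_{n}\in U_{n}\setminus K_{n}$. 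Then $p_{n}\to x$ and $p_{n}\ne x$ for $n\ge n_{0}$. Put $S=\{p_{n}:n\ge n_{0}\}$. For every $m$, $p_{n}\notin K_{n}\supseteq K_{m}$ whenever $n\ge m$, so $S\cap K_{m}\subseteq\{p_{n}:n_{0}\le n<m\}$ is finite and hence closed in $K_{m}$; by the $k_{\omega}$-property $S$ is closed in $X$, contradicting $x\in\overline{S}\setminus S$. Applying this to $(\mathbb{R},\tau_{A})$, which is first-countable and Hausdorff, yields $(2)\Rightarrow(1)$ and closes the cycle.

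The main obstacle is $(2)\Rightarrow(1)$. The equivalence $(1)\Leftrightarrow(3)$ is pure bookkeeping on top of Bouziad--Sukhacheva, and $(1)\Rightarrow(2)$ is a textbook fact; but $(2)\Rightarrow(1)$ genuinely needs both the first-countability of $H$-spaces and the rigidity of $k_{\omega}$-spaces, since a general $k_{\omega}$-space need not be locally compact. The crux is producing the convergent sequence $\{p_{n}\}$ that escapes every $K_{n}$ and playing it off against the $k_{\omega}$ closure condition; everything else is routine.
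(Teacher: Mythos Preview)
Your proof is correct and follows essentially the same approach as the paper: $(1)\Leftrightarrow(3)$ via Bouziad--Sukhacheva, $(1)\Rightarrow(2)$ as a routine consequence of local compactness plus Lindel\"of, and $(2)\Rightarrow(1)$ by using first-countability to build a sequence escaping every $K_{n}$ and deriving a contradiction from the $k_{\omega}$-property. The only cosmetic difference is that in $(2)\Rightarrow(1)$ the paper packages the escaping sequence together with its limit as a compact set that must be swallowed by some $K_{n_{0}}$, whereas you apply the $k_{\omega}$ closure condition directly to the set $S=\{p_{n}\}$; both variants encode the same idea.
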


\begin{proof}
From \cite{BS}, we have (1) $\Leftrightarrow$ (3). The implication (1) $\Rightarrow$ (2) is easily to be checked. It suffices to prove that (2) $\Rightarrow$ (1).

(2) $\Rightarrow$ (1). Assume that $(\mathbb{R}, \tau_{A})$ is a $k_{\omega}$-space, and let $\{K_{n}\}$ be an increasing sequence of compact subsets such that the family $\{K_{n}\}$ determines the topology of  $(\mathbb{R}, \tau_{A})$. Take an arbitrary $x_{0}\in\mathbb{R}$. Since $(\mathbb{R}, \tau_{A})$ is first-countable, choose an open neighborhood base $\{U_{n}: n\in\mathbb{N}\}$ of point $x_{0}$ in $(\mathbb{R}, \tau_{A})$ such that $U_{n+1}\subset U_{n}$ for each $n\in\mathbb{N}$. We claim that there exist $m, p\in\mathbb{N}$ such that $U_{m}\subset K_{p}$. Suppose not, then for each $n\in\mathbb{N}$ it follows that $U_{n}\setminus K_{n}\neq\emptyset$, hence choose a point $x_{n}\in U_{n}\setminus K_{n}$. Put $K=\{x_{n}: n\in\mathbb{N}\}\cup \{x_{0}\}$. Then $K$ is a compact subset in $(\mathbb{R}, \tau_{A})$ and $K\setminus K_{n}\neq\emptyset$ for each $n\in\mathbb{N}$. However, since $(\mathbb{R}, \tau_{A})$ is a $k_{\omega}$-space, it easily check that there exists $n_{0}\in\mathbb{N}$ such that $K\subset K_{n_{0}}$, which is a contradiction.
\end{proof}

From Theorem~\ref{t2}, it natural to pose the following question.

\begin{question}
What subsets $A$ of $\mathbb{R}$ are $(\mathbb{R}, \tau_{A})$ being $\sigma$-compact?
\end{question}

Next we give some a partial answer to this question. First, we give some lemmas.

\begin{proposition}\label{p1}
For arbitrary $B$ of $(\mathbb{R}, \tau_{S})$, we have $nw(B)\geq |B|$. In particular, $w(B)\geq |B|$.
\end{proposition}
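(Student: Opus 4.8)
The plan is to show that \emph{every} network for $B$ must have cardinality at least $|B|$; since every base is in particular a network, the inequality $w(B)\geq |B|$ then follows for free from $w(B)\geq nw(B)\geq |B|$.

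So let $\mathscr{P}$ be an arbitrary network for the subspace $B$ of $(\mathbb{R}, \tau_{S})$. First I would use the defining feature of the Sorgenfrey topology: for each $x\in B$, the set $[x, x+1)$ is $\tau_{S}$-open, so $V_{x}:=[x, x+1)\cap B$ is an open neighborhood of $x$ in $B$. By the network property, there is some $P_{x}\in\mathscr{P}$ with $x\in P_{x}\subseteq V_{x}$; fix one such $P_{x}$ for every $x\in B$.

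The key observation is that the point $x$ is recoverable from the set $P_{x}$: since $x\in P_{x}$ and every element of $P_{x}\subseteq [x, x+1)$ is $\geq x$, we have $x=\min P_{x}$. Hence the assignment $x\mapsto P_{x}$ is an injection of $B$ into $\mathscr{P}$, so $|\mathscr{P}|\geq |B|$. As $\mathscr{P}$ was an arbitrary network, $nw(B)\geq |B|$, and then $w(B)\geq nw(B)\geq |B|$.

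There is essentially no obstacle here; the only point that needs a moment's care is that $\min P_{x}$ genuinely exists (it does, being witnessed by $x$ itself), which is precisely what forces $x\mapsto P_{x}$ to be injective. The same argument goes through verbatim with any fixed positive radius in place of $1$, so no choice of scale matters.
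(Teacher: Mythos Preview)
Your proof is correct and follows essentially the same approach as the paper: both pick, for each $x\in B$, a network element $P_{x}$ contained in a basic Sorgenfrey neighborhood of $x$, and then use that $x$ is the leftmost point of any such $P_{x}$ to force injectivity of $x\mapsto P_{x}$. The paper packages this as pairwise disjointness of the families $\mathscr{B}_{x}=\{P\in\mathscr{P}: x\in P,\ P\subset [x,x+\tfrac{1}{n})\cap B\ \text{for some }n\}$, while you state it directly via $x=\min P_{x}$; these are the same observation.
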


\begin{proof}
Let $\mathscr{P}$ be an arbitrary network of the subspace $B$ with $|\mathscr{P}|=nw(B)$. For each $x\in B$, let
$$\mathscr{B}_{x}=\{x\in P: P\in\mathscr{P}, P\subset [0, \frac{1}{n})\cap B\ \mbox{for some}\ n\in\mathbb{N}\}.$$ Then $\bigcup_{x\in B}\mathscr{B}_{x}\subset \mathscr{P}$ and is also a network of the subspace $B$. However, for any $x, y\in B$ with $x\neq y$, we have $\mathscr{B}_{x}\cap \mathscr{B}_{y}=\emptyset$, hence $nw(B)\geq |B|$.
\end{proof}

By Proposition~\ref{p1} and the separability of $(\mathbb{R}, \tau_{S})$, we have the following corollary.

\begin{corollary}\label{l1}\cite[Proposition 2.3]{Chatyrko2}
For any subset $X$ of $(\mathbb{R}, \tau_{S})$, $X$ is metrizable if and only if $X$ is countable.
\end{corollary}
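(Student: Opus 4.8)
The plan is to prove the two implications separately, relying throughout on the fact that every subspace of the Sorgenfrey line inherits first-countability, regularity, and — by the hereditary separability recorded in the table above — separability.

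For the sufficiency, I would suppose that $X$ is countable. Since $(\mathbb{R},\tau_{S})$ is first-countable and regular (indeed $T_{3}$, being normal and Hausdorff), the subspace $X$ is again first-countable and $T_{3}$. A countable first-countable space has a countable base: take the union, over the countably many points of $X$, of a countable local base at each point. Hence $X$ is a regular, $T_{1}$, second-countable space, and Urysohn's metrization theorem yields that $X$ is metrizable. (One could equally well invoke the classical fact that every countable regular space is metrizable.)

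For the necessity, I would suppose that $X$ is metrizable. Because $(\mathbb{R},\tau_{S})$ is hereditarily separable, $X$ is separable, and a separable metrizable space is second-countable, so $w(X)\le\omega$. On the other hand, Proposition~\ref{p1} gives $|X|\le w(X)$. Combining these, $|X|\le\omega$, i.e. $X$ is countable. The step carrying the real weight here is this last one: everything in the ``metrizable $\Rightarrow$ countable'' direction reduces to the inequality $w(B)\ge|B|$ for subspaces $B$ of the Sorgenfrey line, which is exactly Proposition~\ref{p1}; the reverse implication is only a routine application of standard metrization theory. There is no genuine obstacle beyond assembling these ingredients correctly.
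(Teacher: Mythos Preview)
Your proof is correct and follows the same route the paper indicates: it derives the corollary from Proposition~\ref{p1} together with the (hereditary) separability of $(\mathbb{R},\tau_{S})$, filling in the routine metrization-theorem details that the paper leaves implicit.
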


\begin{lemma}\label{l9}
For an arbitrary subset $A$ of $\mathbb{R}$, $(\mathbb{R}, \tau_{A})$ is submetrizable.
\end{lemma}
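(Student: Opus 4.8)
The plan is to exploit the fundamental inclusion $\tau_{E}\subset\tau_{A}$ observed in the Preliminaries. Recall that a space $(X,\tau)$ is submetrizable precisely when there is a metrizable topology $\tau'$ on the set $X$ with $\tau'\subset\tau$; equivalently, when the identity map is a continuous bijection from $(X,\tau)$ onto some metrizable space. So the entire proof reduces to exhibiting such a coarser metrizable topology, and the usual Euclidean topology is the obvious candidate.

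Concretely, I would take $\tau'=\tau_{E}$. Since $\tau_{E}\subset\tau_{A}$ for every $A\subset\mathbb{R}$, the identity map $\mathrm{id}\colon(\mathbb{R},\tau_{A})\to(\mathbb{R},\tau_{E})$ is continuous (the preimage of any $\tau_{E}$-open set is that same set, which is $\tau_{A}$-open) and is clearly a bijection. As $(\mathbb{R},\tau_{E})$ is metrizable, this witnesses that $(\mathbb{R},\tau_{A})$ is submetrizable. There is essentially no obstacle here; the only point to be careful about is simply invoking the already-noted inclusion $\tau_{E}\subset\tau_{A}$ and the correct definition of submetrizability. This lemma is a one-line consequence of the basic structure of $H$-spaces, and it will presumably be used later in combination with separability-type arguments (e.g. together with Corollary~\ref{l1}) to control metrizability of $(\mathbb{R},\tau_{A})$.
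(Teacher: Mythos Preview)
Your proof is correct and is essentially identical to the paper's own argument: the paper simply notes that $\tau_{E}\subset\tau_{A}$ and concludes submetrizability immediately from that inclusion. You have merely made explicit the definition of submetrizability via the identity map onto $(\mathbb{R},\tau_{E})$, which is exactly what the paper's one-line proof is invoking.
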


\begin{proof}
Since $(\mathbb{R}, \tau_{A})$ and $\tau_E\subset \tau_A$, it follows that $(\mathbb{R}, \tau_{A})$ is submetrizable.
\end{proof}

\begin{lemma}\label{l2}
If $K$ is a compact subset of $(\mathbb{R}, \tau_{A})$, then $K\cap (\mathbb{R}\setminus A)$ is countable.
\end{lemma}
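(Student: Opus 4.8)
\emph{Proof idea.} The plan is to prove slightly more: that the set of points $x\in K$ which are \emph{isolated from the left in} $K$ (i.e. $(x-\epsilon,x)\cap K=\emptyset$ for some $\epsilon>0$) is countable, and that \emph{every} point of $K\cap(\mathbb{R}\setminus A)$ has this property. The countability of the left‑isolated set is a standard fact about subsets of $\mathbb{R}$: if $x$ is isolated from the left in $K$, fix $\epsilon_{x}>0$ with $(x-\epsilon_{x},x)\cap K=\emptyset$ and pick a rational $q_{x}\in(x-\epsilon_{x},x)$; the assignment $x\mapsto q_{x}$ is injective, for if $x<x'$ were two such points with $q_{x}=q_{x'}$ then $x\in(q_{x'},x')\cap K$ contradicts $(x'-\epsilon_{x'},x')\cap K=\emptyset$. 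So the whole statement reduces to showing: \textbf{no point of $K\cap(\mathbb{R}\setminus A)$ is a left limit point of $K$}, meaning that $(x-\epsilon,x)\cap K=\emptyset$ for some $\epsilon>0$.

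First I would record two elementary facts about $\tau_{A}$. For every $c\in\mathbb{R}$ the set $(-\infty,c)$ is $\tau_{A}$‑open: around an $\mathbb{R}$‑point $y<c$ a small interval $(y-\delta,y+\delta)$ fits inside it, and around an $\mathbb{S}$‑point $y<c$ a small interval $[y,y+\delta)$ fits inside it. For every $c\in\mathbb{R}\setminus A$ the set $[c,\infty)$ is $\tau_{A}$‑open (indeed clopen), since $[c,c+\delta)$ is a basic neighborhood of the $\mathbb{S}$‑point $c$.

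Now suppose, for contradiction, that some $x\in K\cap(\mathbb{R}\setminus A)$ is a left limit point of $K$. Choose recursively a strictly increasing sequence $\{x_{n}\}\subset K$ with $x_{n}<x$ and $x_{n}\to x$, and consider the family
\[
\mathscr{U}=\{(-\infty,x_{n}):n\in\mathbb{N}\}\cup\{[x,\infty)\}.
\]
Because $x\notin A$, every member of $\mathscr{U}$ is $\tau_{A}$‑open by the two facts above, and $\bigcup\mathscr{U}=(-\infty,x)\cup[x,\infty)=\mathbb{R}$, so $\mathscr{U}$ is a $\tau_{A}$‑open cover of $K$. But any finite subfamily of $\mathscr{U}$ is contained in $[x,\infty)\cup(-\infty,x_{N})$ for some $N$, which misses $x_{N+1}\in K$; hence $\mathscr{U}$ has no finite subcover, contradicting compactness of $K$ in $(\mathbb{R},\tau_{A})$. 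Therefore every point of $K\cap(\mathbb{R}\setminus A)$ is isolated from the left in $K$, and by the first paragraph $K\cap(\mathbb{R}\setminus A)$ is countable.

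The one genuinely delicate point is the observation that neighborhoods of $\mathbb{S}$‑points extend only to the right: this is exactly what makes $[x,\infty)$ open for $x\notin A$ and lets the ``left tail'' $[x_{N},x)$ escape every finite subcover. Once this is noticed, the rest is routine bookkeeping; in particular the argument uses neither submetrizability of $(\mathbb{R},\tau_{A})$ nor that $K$ is closed in $(\mathbb{R},\tau_{E})$.
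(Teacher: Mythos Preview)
Your proof is correct, and it takes a genuinely different route from the paper's argument.

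The paper proceeds via metrizability machinery: since $\tau_{E}\subset\tau_{A}$, the space $(\mathbb{R},\tau_{A})$ is submetrizable (Lemma~\ref{l9}), so the compact set $K$ is metrizable; hence $X=K\cap(\mathbb{R}\setminus A)$ is metrizable as well. The paper then observes that the $\tau_{A}$-subspace topology on $X$ coincides with the Sorgenfrey subspace topology (because every point of $X$ is an $\mathbb{S}$-point), and invokes Corollary~\ref{l1}---which in turn rests on the network-weight inequality of Proposition~\ref{p1}---to conclude that a metrizable subset of $(\mathbb{R},\tau_{S})$ must be countable.

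Your argument bypasses all of this. You exploit compactness directly: an $\mathbb{S}$-point $x\in K$ cannot be approached from the left inside $K$, because $[x,\infty)$ is $\tau_{A}$-open and the increasing cover $\{(-\infty,x_{n})\}\cup\{[x,\infty)\}$ would then have no finite subcover. The remaining step---that left-isolated points of any subset of $\mathbb{R}$ are countable via an injection into $\mathbb{Q}$---is elementary and independent of compactness. The advantage of your approach is that it is entirely self-contained and pinpoints the exact topological feature (one-sided neighborhoods at $\mathbb{S}$-points) responsible for the conclusion. The paper's approach, on the other hand, ties the lemma into the network-weight framework (Proposition~\ref{p1}) that is reused later, e.g.\ in Theorem~\ref{t111}, so it earns some economy across the paper as a whole.
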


\begin{proof}
By Lemma~\ref{l9}, $K$ is metrizable. Put $X=K\cap (\mathbb{R}\setminus A)$. Then $X$ is metrizable. Moreover, $X$ is subspace of $(\mathbb{R}, \tau_{S})$. By Corollary~\ref{l1}, $X$ is countable. Therefore, $K\cap (\mathbb{R}\setminus A)$ is countable.
\end{proof}

\begin{lemma}\label{t1}
For an arbitrary subset $A$ of $\mathbb{R}$, if $\overline{\mathbb{R}\setminus A}$ is countable then $(\mathbb{R}, \tau_{A})$ is $\sigma$-compact.
\end{lemma}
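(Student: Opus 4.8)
The plan is to split $\mathbb{R}$ as the union of $C:=\overline{\mathbb{R}\setminus A}$ and $U:=\mathbb{R}\setminus C$, to show that each of these two subspaces of $(\mathbb{R},\tau_{A})$ is $\sigma$-compact, and to finish by recalling that a set which is compact in a subspace is compact in the whole space, so that a countable union of $\sigma$-compact subspaces of $(\mathbb{R},\tau_{A})$ is again $\sigma$-compact. (One may read $\overline{\mathbb{R}\setminus A}$ as the closure taken in $\tau_{A}$, which is the weakest reading; the argument is insensitive to this choice, since all we use about $C$ is that it is countable and contains $\mathbb{R}\setminus A$.)

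The piece $C$ is $\sigma$-compact for the trivial reason that it is countable: $C=\bigcup_{x\in C}\{x\}$ exhibits it as a countable union of finite, hence compact, sets.

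For the piece $U$, I would first verify that it carries the Euclidean topology. Since $\mathbb{R}\setminus A\subseteq\overline{\mathbb{R}\setminus A}=C$, every point of $U$ belongs to $A$, i.e.\ is an $\mathbb{R}$-point, so its basic $\tau_{A}$-neighbourhoods are the intervals $(x-\epsilon,x+\epsilon)$; as $U$ is $\tau_{A}$-open, some such interval lies in $U$ for each $x\in U$. Hence $U$ is open in $(\mathbb{R},\tau_{E})$, and the subspace topology it inherits from $\tau_{A}$ is precisely the Euclidean one. An open subset of the Euclidean line is $\sigma$-compact: the sets $K_{n}:=\{x\in U:|x|\le n\ \text{and}\ \operatorname{dist}(x,\mathbb{R}\setminus U)\ge 1/n\}$ are closed and bounded, so compact, and $U=\bigcup_{n\in\mathbb{N}}K_{n}$. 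Each $K_{n}$, being a compact subset of the subspace $U$, is a compact subset of $(\mathbb{R},\tau_{A})$.

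Combining the two parts, $\mathbb{R}=C\cup U$ is covered by the countable family $\{\{x\}:x\in C\}\cup\{K_{n}:n\in\mathbb{N}\}$ of sets compact in $(\mathbb{R},\tau_{A})$, so $(\mathbb{R},\tau_{A})$ is $\sigma$-compact. No step presents a real obstacle; the only one needing a little care is checking that $\tau_{A}$ and $\tau_{E}$ coincide on $U$, and this is exactly the role of the hypothesis: it is necessary to delete the \emph{entire} closure of $\mathbb{R}\setminus A$, not merely $\mathbb{R}\setminus A$, in order to guarantee that the remaining points are all $\mathbb{R}$-points.
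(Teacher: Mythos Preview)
Your proof is correct and follows essentially the same approach as the paper's: both set $U=\mathbb{R}\setminus\overline{\mathbb{R}\setminus A}$, observe that $U\subset A$ is $\tau_{E}$-open and hence $\sigma$-compact with the same compacta in $\tau_{A}$, and handle the countable remainder $\overline{\mathbb{R}\setminus A}$ trivially. One small quibble: in your final sentence, deleting only $\mathbb{R}\setminus A$ would already leave none but $\mathbb{R}$-points; the actual reason for deleting the full closure is to make $U$ open, which is what forces the $\tau_{A}$- and $\tau_{E}$-subspace topologies on $U$ to agree.
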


\begin{proof}
Put $U=\mathbb{R}\setminus\overline{\mathbb{R}\setminus A}$. Then $U$ is open in $(\mathbb{R}, \tau_{A})$ and $U\subset A$, hence $U$ is open in $(\mathbb{R}, \tau_{E})$, which implies that $U$ is $\sigma$-compact. From $U\subset A$, it follows that $U$ is $\sigma$-compact $(\mathbb{R}, \tau_{A})$. By the countability of $\overline{\mathbb{R}\setminus A}$, $(\mathbb{R}, \tau_{A})$ is $\sigma$-compact.
\end{proof}

Now we have the following two results.

\begin{theorem}\label{t4}
For an arbitrary subset $A$ of $\mathbb{R}$, $(\mathbb{R}, \tau_{A})$ is $\sigma$-compact, then $\mathbb{R}\setminus A$ is countable and nowhere dense in $(\mathbb{R}, \tau_{A})$.
\end{theorem}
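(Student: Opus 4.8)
The plan is to prove the two conclusions separately: countability of $\mathbb{R}\setminus A$ is immediate from Lemma~\ref{l2}, while nowhere-density will be obtained by contradiction via a Baire-category argument combined with a structural fact about $\tau_A$-compact sets.

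First, for countability: write $(\mathbb{R},\tau_A)=\bigcup_{n\in\mathbb{N}}K_n$ with each $K_n$ a $\tau_A$-compact set. By Lemma~\ref{l2} each $K_n\cap(\mathbb{R}\setminus A)$ is countable, so $\mathbb{R}\setminus A=\bigcup_n\bigl(K_n\cap(\mathbb{R}\setminus A)\bigr)$ is countable.

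For nowhere-density, suppose to the contrary that $\mathbb{R}\setminus A$ is not nowhere dense in $(\mathbb{R},\tau_A)$; then $\overline{\mathbb{R}\setminus A}^{\,\tau_A}$ has nonempty $\tau_A$-interior. Since every nonempty $\tau_A$-open set contains a Euclidean open interval (a basic neighborhood at an $\mathbb{R}$-point is such an interval, and $[y,y+\varepsilon)$ contains $(y,y+\varepsilon)$), and since $\overline{\mathbb{R}\setminus A}^{\,\tau_A}\subseteq\overline{\mathbb{R}\setminus A}^{\,\tau_E}$, this forces $\mathbb{R}\setminus A$ to be Euclidean-dense in some open interval $(c,d)$. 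Now the interval $(c,d)$, with the subspace topology inherited from $\tau_A$, is an open subspace of the Baire space $(\mathbb{R},\tau_A)$, hence is itself Baire; and $(c,d)=\bigcup_n\bigl(K_n\cap(c,d)\bigr)$ where each $K_n\cap(c,d)$ is closed in $(c,d)$. By the Baire property some $K_m\cap(c,d)$ has nonempty interior in $(c,d)$, so it contains a nonempty $\tau_A$-open set $W$; then $W\subseteq K_m$, and $W$ contains a Euclidean open interval $(a,b)\subseteq(c,d)$.

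To finish, build a forbidden compact subset inside $K_m$. Since $\mathbb{R}\setminus A$ is Euclidean-dense in $(a,b)$, choose an $\mathbb{S}$-point $p\in(\mathbb{R}\setminus A)\cap(a,b)$ and a strictly increasing sequence $\{x_n\}\subseteq(\mathbb{R}\setminus A)\cap(a,b)$ with $x_n\to p$ in $(\mathbb{R},\tau_E)$; all $x_n$ lie in $(a,b)\subseteq W\subseteq K_m$. Because $p$ is an $\mathbb{S}$-point, its basic $\tau_A$-neighborhoods $[p,p+\varepsilon)$ miss $\{x_n\}$, so $p\notin\overline{\{x_n\}}^{\,\tau_A}$; as the only Euclidean cluster point of $\{x_n\}$ is $p$ and $\overline{\{x_n\}}^{\,\tau_A}\subseteq\overline{\{x_n\}}^{\,\tau_E}=\{x_n:n\in\mathbb{N}\}\cup\{p\}$, the set $\{x_n:n\in\mathbb{N}\}$ is $\tau_A$-closed, hence a closed and therefore compact subset of $K_m$. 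But each $x_n$ is an $\mathbb{S}$-point and the $x_n$ strictly increase, so $[x_n,x_{n+1})$ isolates $x_n$; thus $\{x_n:n\in\mathbb{N}\}$ is an infinite discrete space, contradicting compactness. Hence $\mathbb{R}\setminus A$ is nowhere dense in $(\mathbb{R},\tau_A)$.

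I expect the crux to be the middle step, namely passing from ``$\mathbb{R}\setminus A$ is dense in $(c,d)$'' to ``a single $\tau_A$-compact $K_m$ contains a whole Euclidean interval of $(c,d)$'': a direct attempt that spreads $\mathbb{R}\setminus A$ across the $K_n$ fails, since a dense countable set can be a countable union of nowhere-dense sets, so Baire category is essential here. One must also be slightly careful that the relevant piece $K_m\cap(c,d)$ is only closed in $(c,d)$ rather than compact, but this is harmless since it is still contained in the compact set $K_m$, which is all the argument uses.
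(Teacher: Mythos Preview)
Your proof is correct and follows essentially the same strategy as the paper: both use Lemma~\ref{l2} for countability, then argue by contradiction via Baire category to trap a nonempty $\tau_A$-open set inside a single compact $K_m$, and finish by exhibiting a closed non-compact subset of $K_m$. The only cosmetic difference is the final contradictory object---the paper takes $c,d\in\mathbb{R}\setminus A$ inside the open set and observes that $[c,d)$ is a closed (hence compact) subset of $K_m$ that cannot be compact, whereas you build an increasing sequence of $\mathbb{S}$-points converging from below to an $\mathbb{S}$-point and obtain an infinite closed discrete subset of $K_m$; both work equally well.
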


\begin{proof}
By Lemma~\ref{l2}, it is easy to see that $\mathbb{R}\setminus A$ is countable.  It suffices to prove that $\mathbb{R}\setminus A$ is nowhere dense.

Assume that $\mathbb{R}\setminus A$ is not nowhere dense. Then there exists an open subset $V$ being contained in the closure of $\mathbb{R}\setminus A$. Hence there exist $a, b\in \mathbb{R}\setminus A$ such that $[a, b)\subset V$. Then $[a, b)$ is $\sigma$-compact since $[a, b)$ is open and closed in $(\mathbb{R}, \tau_{A})$, hence there exists a sequence of compact subsets $\{K_{n}\}$ of $(\mathbb{R}, \tau_{A})$ such that $[a, b)=\bigcup_{n\in\mathbb{N}} K_{n}$. By Lemma~\ref{l0}, it easily check that $[a, b)$ is a Baire space, then there exists $n\in\mathbb{N}$ such that $K_{n}$ contains an empty open subset $W$ in $(\mathbb{R}, \tau_{A})$. Since $W\subset [a, b)\subset V$, there exist $c, d\in\mathbb{R}\setminus A$ such that $[c, d)\subset [a, b)$. Since $[c, d)$ is closed and $[c, d)\subset K_{n}$, it follows that $[c, d)$ is compact, which is a contradiction. Therefore, $\mathbb{R}\setminus A$ is nowhere dense.
\end{proof}

\begin{theorem}\label{t6}
For an arbitrary subset $A$ of $\mathbb{R}$, if $\mathbb{R}\setminus A$ is countable and scattered in $(\mathbb{R}, \tau_{A})$, then $(\mathbb{R}, \tau_{A})$ is $\sigma$-compact.
\end{theorem}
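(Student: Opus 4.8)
\emph{Overview.} The plan is to first describe explicitly which subsets of $(\mathbb R,\tau_A)$ are compact, and then to assemble a countable family of such sets covering $\mathbb R$, using the well-foundedness that scatteredness provides to keep the family countable. To that end I would first prove: a set $K\subseteq\mathbb R$ is compact in $(\mathbb R,\tau_A)$ if and only if $K$ is closed and bounded in $(\mathbb R,\tau_E)$ and every $x\in K\cap(\mathbb R\setminus A)$ is left-isolated in $K$, i.e.\ $(x-\delta,x)\cap K=\emptyset$ for some $\delta>0$. For the nontrivial direction one refines a $\tau_A$-open cover of $K$ to a Euclidean open cover: at an $\mathbb R$-point keep a basic neighbourhood $(x-\epsilon,x+\epsilon)$, and at an $\mathbb S$-point $x\in K$ replace the chosen basic neighbourhood $[x,x+\delta)$ by $(x-\delta,x+\delta)$, which meets $K$ in the same set since $x$ is left-isolated in $K$; then apply Euclidean compactness. (Conversely, if an $\mathbb S$-point $x\in K$ were a Euclidean left-limit of $K$, the $\tau_A$-open sets $(-\infty,x-\tfrac1n)\cup[x,+\infty)$ would cover $K$ with no finite subcover.) Two consequences I would record: every interval $[a,b]$ with $(a,b]\cap(\mathbb R\setminus A)=\emptyset$ is $\tau_A$-compact, and every set $\{b\}\cup L$ with $b\in\mathbb R\setminus A$ and $L$ a Euclidean-compact subset of $A\cap(b,+\infty)$ is $\tau_A$-compact.

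\emph{Reduction.} Since $\mathbb R\setminus A$ is countable it is $\sigma$-compact (a countable union of singletons), so it suffices to cover $A$ by countably many $\tau_A$-compact sets — and, crucially, these are allowed to meet $\mathbb R\setminus A$. Because $\mathbb R\setminus\overline{\mathbb R\setminus A}^{\,\tau_E}$ is Euclidean-open and contained in $A$, it is $\tau_A$-$\sigma$-compact, so the task reduces to covering $P:=\overline{\mathbb R\setminus A}^{\,\tau_E}$ by countably many $\tau_A$-compact sets. Note that $P$ may be uncountable even though $\mathbb R\setminus A$ is (e.g.\ $\mathbb R\setminus A$ could be the set of left endpoints of the deleted middle thirds of the Cantor set: countable and discrete in $\mathbb S$, hence scattered, yet Euclidean-dense in that Cantor set), so Lemma~\ref{t1} does not finish the job; moreover, Lemma~\ref{l2} guarantees that each of the compact sets produced meets $\mathbb R\setminus A$ only countably, which is a consistency check on the construction.

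\emph{Induction on scattered rank.} I would induct on the Cantor--Bendixson rank $\gamma$ of $\mathbb R\setminus A$ inside $\mathbb S$, a countable ordinal because $\mathbb R\setminus A$ is a countable scattered subspace of $\mathbb S$; $\gamma=0$ is trivial. For $\gamma$ a successor, write $\mathbb R\setminus A=B_0\cup D$ with $D$ the top, relatively discrete, Cantor--Bendixson layer — so each $d\in D$ admits $\epsilon_d>0$ with $(d,d+\epsilon_d)\cap(\mathbb R\setminus A)=\emptyset$ — and $B_0$ of strictly smaller rank; the inductive hypothesis covers $(\mathbb R,\tau_{A\cup D})$ (where $\mathbb R\setminus(A\cup D)=B_0$) by countably many compacts, and such a compact $K$ fails to be $\tau_A$-compact precisely at the points of $K\cap D$ that are Euclidean left-limits of $K$. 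There I would split $K$, keeping $\{d\}\cup(\text{a Euclidean-compact piece of }K\cap[d,d+\epsilon_d))$, which is $\tau_A$-compact by the two consequences above, and returning the left tail $K\cap(d-\eta,d)$ to the construction. At a limit $\gamma$ one writes $\mathbb R\setminus A$ as the increasing union of the sets $(\mathbb R\setminus A)\setminus(\mathbb R\setminus A)^{(\delta_n)}$ for $\delta_n\uparrow\gamma$, each of strictly smaller rank.

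\emph{Main obstacle.} The delicate point is the bookkeeping in the inductive step: re-covering a left tail reintroduces a set that is relatively discrete in some lower layer, whose re-covering spawns further left tails, so one must organize the recursion so that only $\aleph_0$ compact sets are ever produced and so that the points left uncovered along an infinite branch (which turn out to be isolated limit points) are swept up; the limit stage — where a chain of successively finer topologies must be shown to have a $\sigma$-compact supremum — needs particular care, since in general such a supremum need not be $\sigma$-compact (taking $\mathbb R\setminus A=\mathbb Q$ shows exactly this). It is precisely here that scatteredness of $\mathbb R\setminus A$, rather than the mere nowhere-density that Theorem~\ref{t4} shows is only necessary, is used: the well-foundedness of the Cantor--Bendixson rank forces every branch of the recursion to terminate, and its countability keeps the number of stages countable. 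Theorem~\ref{t2} provides a sanity check on the base of the hierarchy (rank $\le 1$ corresponds to $\mathbb R\setminus A$ discrete in $\mathbb S$, which together with closedness gives local compactness, hence $\sigma$-compactness via Lindel\"ofness).
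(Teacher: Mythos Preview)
Your approach is entirely different from the paper's, which argues in two lines: countable scattered spaces embed in $[0,\omega_1)$, ``hence'' $\overline{\mathbb R\setminus A}^{\,\tau_A}$ is countable, and Lemma~\ref{t1} finishes. You reject the middle step, and your Cantor example is decisive against it --- with $\mathbb R\setminus A=L$ the set of left endpoints of the removed thirds, $L$ is countable and discrete in $\mathbb S$ (so scattered in $\tau_A$, since the $\tau_A$-subspace topology on $L$ is the Sorgenfrey one), yet every point of $C\setminus L$ is an $\mathbb R$-point and a Euclidean limit of $L$, so $\overline{L}^{\,\tau_A}\supseteq C$ is uncountable and Lemma~\ref{t1} does not apply.

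But your example does more than expose a gap in the paper's proof: it refutes the theorem itself. Using your own compactness criterion, any $\tau_A$-compact $K$ meets $C$ in a closed nowhere-dense subset of $(C,\tau_E)$. Indeed, if some nonempty $(a,b)\cap C$ were contained in $K$, pick $\ell\in L\cap(a,b)$ (possible since $L$ is Euclidean-dense in $C$); then $\ell\in K\cap(\mathbb R\setminus A)$ must be left-isolated in $K$, giving $\delta\in(0,\ell-a)$ with $(\ell-\delta,\ell)\cap K=\emptyset$, while every $\ell\in L$ is a Euclidean left-limit of $C$, so $\emptyset\neq(\ell-\delta,\ell)\cap C\subseteq(a,b)\cap C\subseteq K$ --- a contradiction. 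Hence a countable union of $\tau_A$-compact sets meets $C$ only in a meagre subset, and since $C$ is Baire it cannot cover $C$. So for this $A$ the space $(\mathbb R,\tau_A)$ is not $\sigma$-compact, although $\mathbb R\setminus A$ is countable and scattered in $(\mathbb R,\tau_A)$. The obstacle you flagged in your induction is therefore not a matter of bookkeeping: the statement as written is false, and no reorganisation of the recursion will close it.
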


\begin{proof}
Assume that $\mathbb{R}\setminus A$ is countable and scattered. Then it follows from \cite[Corollary 3]{V. Kannan} that $\mathbb{R}\setminus A$ is homeomorphic to a subspace of $[0, \omega_{1}).$ Hence it easily see that the closure of $\mathbb{R}\setminus A$ in $(\mathbb{R}, \tau_{A})$ is countable. By Lemma~\ref{t1}, $(\mathbb{R}, \tau_{A})$ is $\sigma$-compact.
\end{proof}

The following example shows that the property of $\sigma$-compact in $(\mathbb{R}, \tau_{A})$ dose not imply local compactness.

\begin{example}
There exists a subset $A$ such that $(\mathbb{R}, \tau_{A})$ is $\sigma$-compact but not locally compact.
\end{example}

\begin{proof}
Let $A=\mathbb{R}\setminus(\{0\}\cup\{\frac{1}{n}: n\in\mathbb{N})$. Then $\mathbb{R}\setminus A=\{0\}\cup\{\frac{1}{n}: n\in\mathbb{N}\}$ is closed, countable scattered and non-discrete. By Theorem~\ref{t6}, $(\mathbb{R}, \tau_{A})$ is $\sigma$-compact. However, it follows from Theorem~\ref{t2} that $(\mathbb{R}, \tau_{A})$ is not locally compact and not a $k_{\omega}$-space.
\end{proof}

Next we prove that $(\mathbb{R}, \tau_{A})^{\aleph_{0}}$ is perfectly subparacompact for arbitrary subset $A\subset\mathbb{R}$. The proof of the following theorem is similar to the proof of \cite[Lemma~2.3]{Heath}. However, for the convenience to the reader, we give out the proof.

\begin{theorem}\label{t5}
For an arbitrary subset $A$ of $\mathbb{R}$, $(\mathbb{R}, \tau_{A})^{n}$ is perfect for every $n\in\mathbb{N}$.
\end{theorem}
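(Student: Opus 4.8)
I would prove this by showing that every open subset of $Y:=(\mathbb{R},\tau_{A})^{n}$ is an $F_{\sigma}$-set; since a space is perfect exactly when each of its open subsets is an $F_{\sigma}$, that suffices. The argument is by induction on $n$. For the base case $n=1$, the space $(\mathbb{R},\tau_{A})$ is $T_{1}$, regular (indeed normal) and hereditarily Lindel\"{o}f --- the latter because it is coarser than $\mathbb{S}$ and a topology coarser than a hereditarily Lindel\"{o}f one is again hereditarily Lindel\"{o}f; hence every open subset is Lindel\"{o}f and so, by regularity, a countable union of closed sets. (Alternatively, an open $U\subseteq(\mathbb{R},\tau_{A})$ is the union of $\operatorname{int}_{\tau_{E}}(U)$, which is Euclidean open and hence an $F_{\sigma}$, with its set of ``left end points'', which is right separated and therefore countable.)

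For the inductive step, assume $(\mathbb{R},\tau_{A})^{n-1}$ is perfect and let $U$ be open in $Y$. Put $V:=\operatorname{int}_{\tau_{E}}(U)$, the interior of $U$ in the Euclidean topology of $\mathbb{R}^{n}$, and $D:=U\setminus V$. Since the product topology of $Y$ is finer than the Euclidean topology of $\mathbb{R}^{n}$, every Euclidean closed subset of $\mathbb{R}^{n}$ is closed in $Y$; as $V$ is Euclidean open it is a countable union of Euclidean closed sets and hence an $F_{\sigma}$ in $Y$. Thus it remains to prove that $D$ is an $F_{\sigma}$ in $Y$. The first point is that every $x\in D$ has at least one coordinate $x_{i}\in\mathbb{R}\setminus A$: otherwise the basic neighbourhood of $x$ would be a Euclidean box contained in $U$, forcing $x\in V$. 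Hence $D=\bigcup_{i=1}^{n}D^{(i)}$, where $D^{(i)}=\{x\in D:x_{i}\in\mathbb{R}\setminus A\}$, and it is enough to show that one of them, say $D^{(1)}$, is an $F_{\sigma}$ in $Y$.

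To analyse $D^{(1)}$ I would combine two ingredients. The first is a right-separation property: for $x\in D^{(1)}$ choose a basic box $B_{x}=[x_{1},x_{1}+1/m_{x})\times\prod_{i\ge 2}B_{i}(x)\subseteq U$, with $m_{x}\in\mathbb{N}$ and $B_{i}(x)$ a symmetric Euclidean interval or a half-open interval according as $x_{i}\in A$ or not; since $x\notin V$ one has $\operatorname{int}_{\tau_{E}}(B_{x})\subseteq V$, so $\operatorname{int}_{\tau_{E}}(B_{x})\cap D=\emptyset$. The second comes from the inductive hypothesis: identifying the slice $\{t\}\times\mathbb{R}^{n-1}$, with the subspace topology from $Y$, with $(\mathbb{R},\tau_{A})^{n-1}$, the set $E_{t}:=\{y\in\mathbb{R}^{n-1}:(t,y)\in D\}$ equals $U_{t}\setminus V_{t}$, where $U_{t}$ is open in $(\mathbb{R},\tau_{A})^{n-1}$ --- hence an $F_{\sigma}$ there by induction --- and $V_{t}$ is Euclidean open, so $\mathbb{R}^{n-1}\setminus V_{t}$ is closed in $(\mathbb{R},\tau_{A})^{n-1}$; consequently $E_{t}$ is an $F_{\sigma}$ in $(\mathbb{R},\tau_{A})^{n-1}$, and, since $\{t\}$ is closed in $(\mathbb{R},\tau_{A})$, the set $\{t\}\times E_{t}$ is an $F_{\sigma}$ in $Y$. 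One then splits $D^{(1)}$, using the right-separation property together with the second countability of the Euclidean topology, into countably many pieces --- grouping the points $x$ by the value of $m_{x}$ and by the cell of a sufficiently fine rational grid of $\mathbb{R}^{n}$ that contains $x$ --- each of which is directly seen to be an $F_{\sigma}$ in $Y$ (for instance because it is closed in $Y$, or is countable, or lies in a single slice $\{t\}\times\mathbb{R}^{n-1}$, so that the previous sentence applies). A countable union of $F_{\sigma}$ sets being an $F_{\sigma}$, this shows $D^{(1)}$, and therefore $D$ and $U$, is an $F_{\sigma}$ in $Y$.

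The step I expect to be the main obstacle is this last, combinatorial, one: turning the local condition $\operatorname{int}_{\tau_{E}}(B_{x})\cap D=\emptyset$ into a bona fide countable decomposition of $D^{(1)}$ whose pieces are individually $F_{\sigma}$. The source of the difficulty is that this condition yields two-sided Euclidean control in the coordinates that are $\mathbb{R}$-points but only one-sided control in the coordinates that are $\mathbb{S}$-points; the two-sided estimate is what pins down a coordinate on a grid cell (so that the piece collapses onto finitely many slices and the inductive hypothesis applies), while the $\mathbb{S}$-point directions have to be disposed of separately and more delicately. This bookkeeping is precisely the part of the argument that runs parallel to \cite[Lemma~2.3]{Heath}: each individual estimate in it is elementary, but assembling them into the countable decomposition is where essentially all of the work lies, and is the part I would expect to occupy most of the write-up.
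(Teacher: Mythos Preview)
Your overall plan---induction on $n$, peel off an easy $F_{\sigma}$ part, then analyse the remainder---is the right one, and the base case is fine. The difficulty is exactly where you locate it, and the sketch you give for the combinatorial decomposition of $D^{(1)}$ does not actually go through. Grouping points of $D^{(1)}$ by the value of $m_{x}$ and by a rational grid cell does not force a piece to lie in a single slice $\{t\}\times\mathbb{R}^{n-1}$ (the first coordinate still ranges over the possibly uncountable set $\mathbb{R}\setminus A$), nor does it make a piece countable or closed; the right-separation condition $\operatorname{int}_{\tau_{E}}(B_{x})\cap D=\emptyset$ gives only one-sided control in \emph{every} $\mathbb{S}$-coordinate simultaneously, which is not enough to pin down the first coordinate on a grid. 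So the step you flag as ``the main obstacle'' is a genuine gap, not just bookkeeping, and the parallel with \cite[Lemma~2.3]{Heath} breaks because your setup differs from Heath's at an earlier stage.

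The paper closes this gap by subtracting more from $U$ before the combinatorial part begins. Rather than removing only $V=\operatorname{int}_{\tau_{E}}(U)$, for each $m\le n+1$ one forms the auxiliary product $Z(m)$ in which the $m$-th factor is replaced by $(\mathbb{R},\tau_{E})$ and all other factors remain $(\mathbb{R},\tau_{A})$. By the inductive hypothesis together with \cite[Lemma~2.2]{Heath} (a product of a metric space with a perfect space is perfect), each $Z(m)$ is perfect, so $U(m):=\operatorname{int}_{Z(m)}(U)$ is already an $F_{\sigma}$ in $Z$. Removing $U^{\star}=\bigcup_{m}U(m)$ leaves a set $U'$ whose points have \emph{all} coordinates in $\mathbb{R}\setminus A$, not merely one. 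At that point the sets $U'_{j}=\{z\in U':W_{j}(z)\subset U\}$, with $W_{j}(z)$ the half-open cube of side $1/j$ based at $z$, are shown to be closed in $Z$ by a direct argument---no grids, no slices. In short, the inductive hypothesis should be spent on the mixed products $Z(m)$ rather than on slices of $D$; that is precisely the move that converts the vague bookkeeping you anticipate into a clean closure verification.
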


\begin{proof}
By induction. The theorem is clear for $n=1$ since  $(\mathbb{R}, \tau_{A})$ is a Lindel\"{o}f space. Therefore let us suppose the theorem for $n$ and let us prove it for $n+1$.

Let $Z=\prod_{i=1}^{n+1}Z_{i}$ with $Z_{i}=(\mathbb{R}, \tau_{A})$ for all $i\leq n+1.$ For every $m\leq n+1$, put $Z(m)=\prod_{i=1}^{n+1}Z_{i}(m)$, where $Z_{m}(m)=(\mathbb{R}, \tau_{E})$ and $Z_{i}(m)=(\mathbb{R}, \tau_{A})$ if $i\neq m$.

Now it suffices to prove that arbitrary open subset $U$ of $Z$ is an $F_{\sigma}$ in $Z$. For every $m\leq n+1$, let $U(m)$ be the interior of $U$ as a subset of $Z(m)$, and let $U^{\star}=\bigcup_{m=1}^{n+1}U(m)$. It follows from \cite[Lemma 2.2]{Heath} that each $Z(m)$ is perfect, hence $U(m)$ is an $F_{\sigma}$ in $Z(m)$ and thus also in $Z$. Therefore, $U^{\star}$ is also an $F_{\sigma}$ in $Z$. Put $U^{\prime}=U\setminus U^{\star}$. Thus it only remains to prove that $U^{\prime}$ is an $F_{\sigma}$ in $Z$. Clearly, for each $x=(x_{1}, \ldots, x_{n+1})\in U^{\prime}$, it has $x_{i}\in \mathbb{R}\setminus A$ for each $i\leq n+1$.

For each $z\in U^{\prime}$, let $\{W_{j}(z): j\in\mathbb{N}\}$ denote the base of neighborhoods of $z$ in $Z$ defined by
$$W_{j}(z)=\{y\in Z: y_{i}\in [z_{i}, z_{i}+\frac{1}{j})\ \mbox{for each}\ i\leq n+1\}.$$
For every $j\in\mathbb{N}$, let $$U^{\prime}_{j}=\{z\in U^{\prime}: W_{j}(z)\subset U\}.$$It easily see that $U^{\prime}=\bigcup_{j=1}^{\infty}U^{\prime}_{j}$.
Next we shall prove that each $U^{\prime}_{j}$ is closed in $Z$. Take an arbitrary $j\in\mathbb{N}$, assume $z\not\in U^{\prime}_{j}$, it suffices to prove that $z$ is not in the closure of $U^{\prime}_{j}$ in $Z$.

For each $F\subset\{1, \cdots, n+1\}$, let
$$U^{\prime}_{j, F}(z)=\{y\in U^{\prime}_{j}: z_{i}=y_{i}\ \mbox{iff}\ i\in F\}.$$
Clearly, $U^{\prime}_{j}=\bigcup\{U^{\prime}_{j, F}(z): F\subset \{1, \cdots, n+1\}\}$. Then it suffices to prove that for each $F\subset\{1, \cdots, n+1\}$ there exists a neighborhood of $z$ in $Z$ disjoint from $U^{\prime}_{j, F}$. Indeed, suppose that $W_{j}(z)\cap U^{\prime}_{j, F}(z)\neq\emptyset$, then it can choose a point $x\in W_{j}(z)\cap U^{\prime}_{j, F}(z)$. Then the set $$V=W_{j}(z)\cap\{y\in Z: y_{i}<x_{i}\ \mbox{if}\ i\not\in F\}$$ is a neighborhood of $z$ in $Z$, and it will suffice to prove that $V\cap U^{\prime}_{j, F}=\emptyset.$

Suppose not, then there exists some $y\in V\cap U^{\prime}_{j, F}$. Clearly, $y\in W_{j}(z)$ and $y\neq z$, thus there is an $m\leq n+1$ such that $y_{m}>z_{m}$. Then $m\not\in F$. Put $$W=W_{j}(y)\cap \{u\in Z: y_{m}<u_{m}\}.$$ Clearly, $W$ is open in $Z(m)$ and $W\subset W_{j}(y)\subset U$. It follows from the definition of $U(m)$ that $W\subset U(m)\subset U^{\star}$. Moreover, it easily check that $x\in W$. Therefore, $x\in U^{\star}$, which is a contradiction. That completes the proof.
\end{proof}

\begin{theorem}
For arbitrary $n\in\mathbb{N}$, $(\mathbb{R}, \tau_{A})^{n}$ is perfectly subparacompact.
\end{theorem}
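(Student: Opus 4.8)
The plan is to combine the previous theorem with a subparacompactness result for the countable power $(\mathbb{R},\tau_A)^{\aleph_0}$, noting that any finite power is a closed (indeed clopen-embeddable, but at least closed) subspace of the countable power and that subparacompactness is hereditary with respect to closed subspaces, while perfectness of $(\mathbb{R},\tau_A)^n$ was already established in Theorem~\ref{t5}. So the real content is to show $(\mathbb{R},\tau_A)^{\aleph_0}$ is subparacompact; once that is in hand, each $(\mathbb{R},\tau_A)^n$ is subparacompact as a closed subspace, and being perfect by Theorem~\ref{t5}, it is perfectly subparacompact by the definition in the Preliminaries.

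For subparacompactness of $(\mathbb{R},\tau_A)^{\aleph_0}$, I would follow the standard route for Sorgenfrey-type spaces: recall that a regular space is subparacompact if and only if every open cover has a $\sigma$-discrete closed refinement, and also that subparacompactness follows from being weakly $\theta$-refinable together with some covering property, but cleanest here is to exploit that $(\mathbb{R},\tau_A)$ is Lindel\"of and first-countable and that its topology is generated by the half-open intervals together with the Euclidean topology. Concretely, I would first reduce to showing that the canonical base of $(\mathbb{R},\tau_A)^{\aleph_0}$ consisting of products $\prod_i B_i$ (with $B_i=\mathbb{R}$ for all but finitely many $i$ and $B_i$ a basic $\tau_A$-neighborhood otherwise) yields, for each open cover, a $\sigma$-locally finite closed refinement; this is where the half-open structure matters, because the sets $[a,b)$ in the Sorgenfrey coordinates are clopen, so a carefully chosen grid of products of such clopen intervals and Euclidean open sets can be disjointified into closed pieces. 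I would organize the $\sigma$-local-finiteness by the finite support: for each finite $F\subset\omega$ and each $n$, handle the part of the cover "witnessed" by supports contained in $F$ at scale $1/n$, getting a locally finite family there, and then take the union over the countably many pairs $(F,n)$.

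The main obstacle will be the local finiteness of the closed refinement in the infinite product, since the Sorgenfrey plane $\mathbb{S}^2$ already shows that naive half-open grids are only point-finite, not locally finite — so I expect to need the Burke-style trick of passing through weak $\theta$-refinability (Definition~2.2(4) in the Preliminaries) for the full power and then upgrading: a weakly $\theta$-refinable perfect space is subparacompact, because perfectness lets one replace the point-$k$-finite open families by $\sigma$-locally finite closed families (writing each open set as an $F_\sigma$ and splitting along the finite multiplicity levels). So the refined plan is: (i) show $(\mathbb{R},\tau_A)^{\aleph_0}$ is weakly $\theta$-refinable, by producing for each open cover an open refinement $\mathscr{V}=\bigcup_n\mathscr{V}(n)$ with the point-finiteness-at-some-level property, using products of half-open intervals of mesh $1/n$ so that along the diagonal directions the multiplicity becomes finite; (ii) invoke perfectness of finite powers from Theorem~\ref{t5} — and more to the point, establish perfectness of the countable power, which follows by essentially the same induction/argument as Theorem~\ref{t5} together with the fact that a product of countably many perfect Lindel\"of spaces is perfect when the relevant interiors argument goes through, or alternatively just work with the finite powers directly as closed subspaces; (iii) conclude subparacompactness of finite powers, hence perfect subparacompactness. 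If establishing weak $\theta$-refinability of the infinite power proves delicate, the fallback is to prove the theorem only as stated, for finite $n$, where a direct grid argument in $(\mathbb{R},\tau_A)^n$ combined with Lindel\"ofness gives a $\sigma$-locally finite closed refinement outright, bypassing the infinite product entirely.
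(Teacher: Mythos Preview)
Your plan has two genuine gaps.

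First, the route through the countable power is circular relative to this paper. In the paper, subparacompactness of $(\mathbb{R},\tau_A)^{\aleph_0}$ is obtained \emph{after} the present theorem, by applying \cite[Proposition 2.1]{Heath} and \cite[Proposition 2.7]{Lutzer} to the already-established finite-power result. So you cannot invoke the countable-power theorem here. If instead you intend to prove $(\mathbb{R},\tau_A)^{\aleph_0}$ subparacompact from scratch, you need it to be perfect in order to run the ``perfect $+$ weakly $\theta$-refinable $\Rightarrow$ subparacompact'' upgrade, but Theorem~\ref{t5} only gives perfectness of the \emph{finite} powers; your remark that the same induction ``goes through'' for the infinite product is not justified, and your weak $\theta$-refinability argument for $(\mathbb{R},\tau_A)^{\aleph_0}$ is only a sketch.

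Second, your fallback assumes $(\mathbb{R},\tau_A)^n$ is Lindel\"of, which is false in general: already for $A=\emptyset$ this is $\mathbb{S}^n$, and $\mathbb{S}^2$ is not Lindel\"of. So a ``grid plus Lindel\"ofness'' argument cannot produce a $\sigma$-locally finite closed refinement.

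The paper's actual proof avoids both issues by a direct induction on $n$ in the style of Heath--Michael and Lutzer. Given subparacompactness of $(\mathbb{R},\tau_A)^n$, one shows $(\mathbb{R},\tau_A)^{n+1}$ is weakly $\theta$-refinable (and then concludes via \cite[Proposition~2.9]{Lutzer}, using perfectness from Theorem~\ref{t5}). The key device is, for each coordinate $m\leq n+1$, to replace the $m$-th factor by $(\mathbb{R},\tau_E)$, obtaining a space $Z(m)$ which is (metrizable)$\times (\mathbb{R},\tau_A)^n$ and hence perfectly subparacompact by \cite[Corollary~2.6]{Lutzer}; one takes the interiors of the cover in each $Z(m)$ and refines there. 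The residual set $Y$ not covered by any of these refinements consists of points all of whose coordinates lie in $\mathbb{R}\setminus A$, and for such points the half-open basic neighbourhoods have the point as a corner, so the chosen basic sets meet $Y$ in singletons --- this yields the last layer $\mathscr{H}(0)$ of the weak $\theta$-refinement. This ``switch one coordinate to Euclidean and treat the all-Sorgenfrey corner points separately'' idea is precisely what your plan is missing.
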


\begin{proof}
By Theorem~\ref{t5}, $(\mathbb{R}, \tau_{A})^{n}$ is perfect. It suffices to prove that $(\mathbb{R}, \tau_{A})^{n}$ is subparacompact.

By induction. The result is certainly true for $n=1$. Let us assume that $(\mathbb{R}, \tau_{A})^{n}$ is subparacompact. Next it will prove that $(\mathbb{R}, \tau_{A})^{n+1}$ is subparacompact. By \cite[Proposition 2.9]{Lutzer}, it suffices to prove that $(\mathbb{R}, \tau_{A})^{n+1}$ is weakly $\theta$-refinable. Put $Z=(\mathbb{R}, \tau_{A})^{n+1}$. Let $\mathscr{W}=\{W(\alpha): \alpha\in A\}$ be a basic open cover of the space $Z$, where $W(\alpha)=U(1, \alpha)\times \cdots\times U(n+1, \alpha)$ such that $U(k, \alpha)=(a(k, \alpha), b(k, \alpha))$ if $a(k, \alpha)\in A$ and $U(k, \alpha)=[a(k, \alpha), b(k, \alpha))$ if $a(k, \alpha)\not\in A$. By the same notations in the proof of Theorem~\ref{t5}, let $W(\alpha, m)$ be the interior of the set $W(\alpha)$ in $Z(m)$ for each $m\leq n+1$, thus $W(\alpha, m)$ is open in $Z(m)$, hence also open in $Z$. For each $m\leq n+1$, put $\mathscr{G}(m)=\{W(\alpha, m): \alpha\in A\}$. By \cite[Corollary 2.6]{Lutzer}, each $Z(m)$ is perfect subparacompact. Then it follows from \cite[Porposition 2.9]{Lutzer} that $\mathscr{G}(m)$ has a weakly $\theta$-refinement $\mathscr{H}(m)$ which covers $\bigcup\mathscr{H}(m)$ and which consists of open subsets of $Z(m)$. Clearly, $\mathscr{H}(m)$ is also a collection of open subsets of $Z$. Let $$Y=Z\setminus\cup\{\bigcup\mathscr{H}(m): 1\leq m\leq n+1\}.$$ Then for each $y\in Y$ it has $y_{i}\in \mathbb{R}\setminus A$ for each $i\leq n+1$, hence there exists $\alpha\in A$ such that $y_{i}=a(i, \alpha)$ for each $i\leq n+1$. For each $y\in Y$, pick $\alpha(y)\in A$ such that $y\in W(\alpha(y))$. Put $$\mathscr{H}(0)=\{W(\alpha(y)): y\in Y\}.$$ It easily see that if $x$ and $y$ are distinct elements of $Y$, then $y\not\in W(\alpha(x))$. Therefore, $\mathscr{H}(0)$ is a collection of open subsets of $Z$ which covers $Y$ in such a way that each point of $Y$ belongs to exactly one member of $\mathscr{H}(0)$. Hence $\mathscr{H}=\{\mathscr{H}(m): m\in\omega\}$ is a weak $\theta$-refinement of $\mathscr{W}$. Therefore, $Z$ is weakly $\theta$-refinable.
\end{proof}

By \cite[Proposition 2.1]{Heath} and \cite[Proposition 2.7]{Lutzer}, we have the following theorem.

\begin{theorem}
For an arbitrary subset $A$ of $\mathbb{R}$, $(\mathbb{R}, \tau_{A})^{\aleph_{0}}$ is perfectly subparacompact.
\end{theorem}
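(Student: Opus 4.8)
The plan is to obtain the statement from the finite-dimensional case established just above, by means of the two propositions quoted in the preceding sentence. By the two preceding theorems, $(\mathbb{R},\tau_{A})^{n}$ is perfect and subparacompact --- hence perfectly subparacompact --- for every $n\in\mathbb{N}$; what remains is only to transfer these two properties from every finite power to the full countable power $(\mathbb{R},\tau_{A})^{\aleph_{0}}$.

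For perfectness, I would apply \cite[Proposition 2.1]{Heath}: a countable product whose finite subproducts are all perfect is itself perfect, once the factors meet the (first-countability / order-type) hypotheses of that proposition. So the first task is to check that $(\mathbb{R},\tau_{A})$ --- a first-countable space with the explicit neighbourhood base consisting of the intervals $(x-\epsilon,x+\epsilon)$ at $\mathbb{R}$-points and the half-open intervals $[x,x+\epsilon)$ at $\mathbb{S}$-points --- satisfies those hypotheses uniformly in $A$; then \cite[Proposition 2.1]{Heath}, applied with every factor equal to $(\mathbb{R},\tau_{A})$ and fed the ``each finite power is perfect'' input from Theorem~\ref{t5}, yields that $(\mathbb{R},\tau_{A})^{\aleph_{0}}$ is perfect. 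For subparacompactness I would likewise invoke \cite[Proposition 2.7]{Lutzer}, which lifts subparacompactness of all finite subproducts to the countable product under the analogous structural hypotheses; the input ``each finite power is subparacompact'' is again supplied by the preceding theorem. Combining the two conclusions, $(\mathbb{R},\tau_{A})^{\aleph_{0}}$ is perfect and subparacompact, i.e.\ perfectly subparacompact.

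The step demanding the most care is not the logical skeleton but verifying that the hypotheses under which \cite[Proposition 2.1]{Heath} and \cite[Proposition 2.7]{Lutzer} are stated are genuinely met by $(\mathbb{R},\tau_{A})$ for every $A$ --- in particular that the distinguished $\mathbb{S}$-points with their half-open neighbourhoods do not obstruct the limit construction. Should a literal application not go through, the fallback is to rerun the induction of the proof of Theorem~\ref{t5} ``in the limit'': cover an arbitrary open $U\subset(\mathbb{R},\tau_{A})^{\aleph_{0}}$ by basic boxes, split off the portion of $U$ lying in the interior of $U$ relative to making finitely many coordinates Euclidean (an $F_{\sigma}$ by the finite-power results, exactly as the set $U^{\star}$ there), and show the remainder --- all of whose points are $\mathbb{S}$-points in every coordinate --- is a countable union of closed sets via the same $W_{j}(z)$, $U'_{j,F}(z)$ slicing, now letting $F$ range over the finite subsets of $\omega$; the identical scheme yields a weak $\theta$-refinement, giving subparacompactness through \cite[Proposition 2.9]{Lutzer} as in the finite case.
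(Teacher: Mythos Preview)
Your proposal is correct and matches the paper's own argument exactly: the paper simply cites \cite[Proposition 2.1]{Heath} and \cite[Proposition 2.7]{Lutzer}, feeding in the finite-power results from the two preceding theorems, to conclude that $(\mathbb{R},\tau_{A})^{\aleph_{0}}$ is perfectly subparacompact. Your additional remarks about verifying hypotheses and the fallback induction are more detail than the paper provides, but the core strategy is identical.
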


\begin{corollary}\cite{Lutzer,Heath}
The space $(\mathbb{R}, \tau_{S})^{\aleph_{0}}$ is perfectly subparacompact.
\end{corollary}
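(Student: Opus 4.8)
The plan is to reduce the countable-power statement to the finite-power statement just established, via the two cited factorization lemmas for countable products. First I would record that the preceding theorem gives, for every $n\in\mathbb{N}$, that $(\mathbb{R},\tau_{A})^{n}$ is simultaneously perfect and subparacompact; these are exactly the two hypotheses one needs in order to invoke the Heath and Lutzer results. Note that it is essential that we have \emph{all} finite powers with the property, not merely a fixed one — this is precisely why the previous theorem was phrased for $(\mathbb{R},\tau_{A})^{n}$, $n\in\mathbb{N}$ — since countable products of perfectly subparacompact spaces need not be perfectly subparacompact in general.

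For the perfect half I would appeal to \cite[Proposition 2.1]{Heath}, which asserts that if $X^{n}$ is perfect for every $n\in\mathbb{N}$, then $X^{\aleph_{0}}$ is perfect. The mechanism behind this — and the reason one cannot simply cite "a countable product of perfect spaces is perfect" — is that an arbitrary open set $U$ in $X^{\aleph_{0}}$ is a union of basic open boxes each depending on only finitely many coordinates, so $U=\bigcup_{n}\pi_{n}^{-1}(V_{n})$ where $\pi_{n}$ is the projection onto the first $n$ coordinates and $V_{n}$ is the interior in $X^{n}$ of the corresponding slice; perfectness of each $X^{n}$ lets one write $V_{n}$ as an $F_{\sigma}$, and a bookkeeping argument then assembles an $F_{\sigma}$ representation of $U$. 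Since this is exactly the content of the cited proposition, I would not reprove it, only check that the hypothesis matches the conclusion of the preceding theorem.

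For the subparacompact half I would appeal to \cite[Proposition 2.7]{Lutzer}, which states that if $X^{n}$ is subparacompact for every $n\in\mathbb{N}$, then $X^{\aleph_{0}}$ is subparacompact; its proof again reduces an open cover of the product to the finite-coordinate "traces" of its members, refines each trace using subparacompactness of the appropriate finite power, and amalgamates the resulting $\sigma$-locally finite closed families across the countably many coordinate blocks — in the same spirit as the weakly $\theta$-refinable argument used two theorems earlier, but now iterated over all initial segments of $\omega$. Combining the two conclusions, $(\mathbb{R},\tau_{A})^{\aleph_{0}}$ is perfect and subparacompact, i.e. perfectly subparacompact, as claimed; the corollary for the Sorgenfrey line then follows at once by taking $A=\emptyset$, so that $(\mathbb{R},\tau_{\emptyset})^{\aleph_{0}}=\mathbb{S}^{\aleph_{0}}$.

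The only point that genuinely requires care is the verification that the finite-power hypotheses are in the exact form demanded by \cite[Proposition 2.1]{Heath} and \cite[Proposition 2.7]{Lutzer}; once that alignment is confirmed, the present theorem is a direct citation, and I expect no further obstacle.
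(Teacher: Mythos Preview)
Your proposal is correct and matches the paper's approach exactly: the paper derives the $\aleph_{0}$-power result from the finite-power theorem via \cite[Proposition 2.1]{Heath} and \cite[Proposition 2.7]{Lutzer}, and the Sorgenfrey corollary is then the specialization $A=\emptyset$. No further argument is given in the paper, so your reconstruction is faithful (and in fact more detailed than necessary).
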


Finally, we consider the quasi-metrizability of $H$-spaces. It is well-known that $(\mathbb{R}, \tau_{E})$ and $(\mathbb{R}, \tau_{S})$ are all quasi-metrizable, it natural to pose the following question.

\begin{question}\label{q11}
For an arbitrary $A\subset \mathbb{R}$, is $(\mathbb{R}, \tau_{A})$ quasi-metrizable?
\end{question}

We give some a negative answer to Question~\ref{q11} in Example~\ref{e111}. Indeed, from the definition of generalized ordered space, we have the following proposition.

\begin{proposition}\label{p0}
For arbitrary $A\subset\mathbb{R}$, the $H$-space $(\mathbb{R}, \tau_{A})$ is a generalized ordered space.
\end{proposition}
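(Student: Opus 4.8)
The plan is to recall the definition of a generalized ordered (GO) space and exhibit the required structure directly on $(\mathbb{R}, \tau_{A})$, using the natural linear order $<$ inherited from $\mathbb{R}$. Recall that a topological space $(X, \tau)$ is a \emph{generalized ordered space} if there is a linear order $<$ on $X$ such that $\tau$ has a base consisting of order-convex sets and $\tau$ contains the open-interval topology $\lambda_{<}$ induced by $<$. Equivalently (by the Čech characterization), $(X,\tau)$ is a GO-space iff it embeds as a subspace of a linearly ordered topological space. So it suffices to produce, for the usual order on $\mathbb{R}$, a base of $\tau_{A}$ made of $<$-convex sets, together with the observation that $\tau_{E} \subseteq \tau_{A}$ gives $\lambda_{<} = \tau_{E} \subseteq \tau_{A}$.

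The key steps, in order, are as follows. First I would note that $\lambda_{<}$ on $\mathbb{R}$ is exactly the Euclidean topology $\tau_{E}$, and since $\tau_{E} \subseteq \tau_{A}$ (stated in the Preliminaries), the order topology is contained in $\tau_{A}$. Second, I would exhibit the base: for $x \in A$ take the family $\{(x-\epsilon, x+\epsilon) : \epsilon > 0\}$, and for $x \in \mathbb{R}\setminus A$ take the family $\{[x, x+\epsilon) : \epsilon > 0\}$; by the definition of $\tau_{A}$ (Definition in the Preliminaries) the union of these families over all $x$ is a base for $\tau_{A}$. Third, I would observe that every set in this base is order-convex: an open interval $(x-\epsilon, x+\epsilon)$ is convex, and a half-open interval $[x, x+\epsilon)$ is convex as well, since for $a, b$ in the set and $a < c < b$ we certainly have $x \le a < c < x+\epsilon$, so $c$ lies in the set. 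Hence $\tau_{A}$ has a base of order-convex sets and refines $\lambda_{<}$, which is precisely the definition of a GO-space.

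I do not expect any serious obstacle here; the statement is essentially a matter of unwinding definitions, and the only thing to be mildly careful about is citing the correct formulation of ``generalized ordered space'' (the Čech/Lutzer characterization via a base of convex sets together with containment of the interval topology) and verifying convexity of the half-open basic neighborhoods at $\mathbb{S}$-points. If one instead wants the embedding formulation, one can remark that every GO-space in the convex-base sense embeds in a LOTS, but for the purposes of this proposition the convex-base description is the cleanest and most self-contained route.
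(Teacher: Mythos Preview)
Your proposal is correct and takes essentially the same approach as the paper, which simply asserts the result ``from the definition of generalized ordered space'' without further argument. Your write-up is a clean, explicit unpacking of exactly that: you verify $\tau_E\subset\tau_A$ and exhibit the standard base of order-convex sets, which is precisely what the paper intends by its one-line appeal to the definition.
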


By \cite[Theorem 10]{Jacob Kolner}, we can easily give a characterization of subset $A$ of $\mathbb{R}$ such that $(\mathbb{R}, \tau_{A})$ is quasi-metrizable, see Theorem~\ref{t4545}.

\begin{theorem}\label{t4545}
For any subset $A\subset \mathbb{R}$, the $H$-space $(\mathbb{R}, \tau_{A})$ is quasi-metrizable if and only if $\mathbb{R}\setminus A$ is a $F_{\sigma}$-set in $(\mathbb{R}, \tau_{S^{-}})$, where $(\mathbb{R}, \tau_{S^{-}})$ is the set of real numbers with the topology generated by the base $\{(a, b]: a, b\in\mathbb{R}, a<b\}$.
\end{theorem}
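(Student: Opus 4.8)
The plan is to deduce Theorem~\ref{t4545} directly from the known characterization of quasi-metrizability for generalized ordered (GO) spaces. By Proposition~\ref{p0}, $(\mathbb{R},\tau_{A})$ is a GO-space over the usual linear order on $\mathbb{R}$: its topology refines the order (Euclidean) topology by adjoining, at each point $x\in\mathbb{R}\setminus A$, the basic neighbourhoods $[x,x+\epsilon)$, and adjoining nothing "from the left". Thus, in the standard terminology for GO-spaces, the set of \emph{right-exceptional} points (points $x$ for which $[x,\rightarrow)$ is a neighbourhood of $x$ while $x$ is not isolated) is exactly $\mathbb{R}\setminus A$, while the set of \emph{left-exceptional} points is empty. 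Indeed, for $x\in\mathbb{R}\setminus A$ every basic neighbourhood $[x,x+\epsilon)$ lies in $[x,\rightarrow)$ but meets $(x,\rightarrow)$, so $x$ is right-exceptional and not left-exceptional; and for $x\in A$ every basic neighbourhood $(x-\epsilon,x+\epsilon)$ meets both rays, so $x$ is neither. Note also that $(\mathbb{R},\tau_{A})$ has no isolated points, so no ``cleanup'' of isolated points is needed in the identification.

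Next I would invoke \cite[Theorem 10]{Jacob Kolner} in the form applicable to GO-spaces over the real line: such a space is quasi-metrizable if and only if its right-exceptional set is an $F_{\sigma}$-set in the LOTS topology with base $\{(a,b]:a,b\in\mathbb{R},\,a<b\}$ and, dually, its left-exceptional set is an $F_{\sigma}$-set in the Sorgenfrey-type topology with base $\{[a,b):a,b\in\mathbb{R},\,a<b\}$. For $\tau_{A}$ the left-exceptional set is empty, hence the second condition is vacuously satisfied, and the criterion collapses to: $(\mathbb{R},\tau_{A})$ is quasi-metrizable if and only if $\mathbb{R}\setminus A$ is an $F_{\sigma}$-set in $(\mathbb{R},\tau_{S^{-}})$, since $\tau_{S^{-}}$ is precisely the topology generated by the base $\{(a,b]:a<b\}$. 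Both implications of Theorem~\ref{t4545} follow at once from this specialization, so no separate argument for necessity and sufficiency is needed.

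Concretely, the write-up reduces to three elementary verifications: (i) the right-exceptional set of $(\mathbb{R},\tau_{A})$ is exactly $\mathbb{R}\setminus A$ (done above); (ii) the ``conjugate'' ordered topology attached to the right-exceptional set in Kofner's criterion is the one generated by the left-half-open intervals $(a,b]$, i.e.\ it is exactly $\tau_{S^{-}}$; and (iii) the left-exceptional clause of the criterion is trivially met. With (i)--(iii) in hand, \cite[Theorem 10]{Jacob Kolner} gives the equivalence in the stated form.

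The step I expect to be the main obstacle is not a genuine difficulty but careful bookkeeping: one must transcribe the directional conventions of \cite[Theorem 10]{Jacob Kolner} correctly — which family of half-open intervals is paired with the right-exceptional set and which with the left — and confirm that the required $F_{\sigma}$ condition is taken with respect to $\tau_{S^{-}}$ on all of $\mathbb{R}$ (a topology strictly finer than $\tau_{E}$ at points of $A$), rather than with respect to $\tau_{E}$ or $\tau_{S}$. This amounts to reading the cited theorem against the three observations above; beyond Proposition~\ref{p0}, no new structural fact about $H$-spaces is required.
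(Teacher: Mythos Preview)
Your proposal is correct and follows exactly the approach the paper takes: the paper does not supply a separate proof environment for Theorem~\ref{t4545} but simply says the result follows ``easily'' from \cite[Theorem 10]{Jacob Kolner} together with Proposition~\ref{p0}, and your write-up is precisely the elaboration of that citation---identifying $\mathbb{R}\setminus A$ as the right-exceptional set, noting the left-exceptional set is empty, and reading off that the relevant conjugate topology is $\tau_{S^{-}}$.
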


Now, we can give a negative answer to Question~\ref{q11}.

\begin{example}\label{e111}
There exists a subset $A$ of $\mathbb{R}$ such that $(\mathbb{R}, \tau_{A})$ is not quasi-metrizable.
\end{example}

\begin{proof}
Indeed, let $A=\mathbb{Q}$ be the rational number. By Theorem~\ref{t4545}, assume $\mathbb{R}\setminus A$ is a $F_{\sigma}$-set in $(\mathbb{R}, \tau_{S^{-}})$, then $\mathbb{Q}$ is a $G_{\delta}$-set in $(\mathbb{R}, \tau_{S^{-}})$. However, it follows from \cite[Theorem 3.4]{D.K. Burke1} that
$(\mathbb{R}, \tau_{S^{-}})$ does not have a dense metrizable $G_{\delta}$-space, which is a contradiction.
\end{proof}

Obviously, if $\mathbb{R}\setminus A$ is a $F_{\sigma}$-set in $(\mathbb{R}, \tau_{A})$, then $\mathbb{R}\setminus A$ is a $F_{\sigma}$-set in $(\mathbb{R}, \tau_{S^{-}})$, hence we have the following corollary.

\begin{corollary}\label{cc0000}
If $\mathbb{R}\setminus A$ is a $F_{\sigma}$-set in $(\mathbb{R}, \tau_{A})$, then $(\mathbb{R}, \tau_{A})$ is quasi-metrizable.
\end{corollary}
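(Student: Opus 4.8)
The plan is to obtain the corollary directly from Theorem~\ref{t4545}. That theorem characterises quasi-metrizability of $(\mathbb{R},\tau_{A})$ by the condition that $\mathbb{R}\setminus A$ be an $F_{\sigma}$-set in $(\mathbb{R},\tau_{S^{-}})$, so under the hypothesis of the corollary it suffices to upgrade ``$\mathbb{R}\setminus A$ is $F_{\sigma}$ in $(\mathbb{R},\tau_{A})$'' to ``$\mathbb{R}\setminus A$ is $F_{\sigma}$ in $(\mathbb{R},\tau_{S^{-}})$''. I would in fact prove something slightly stronger: that the hypothesis forces $\mathbb{R}\setminus A$ to be an $F_{\sigma}$-set already in the Euclidean line $(\mathbb{R},\tau_{E})$. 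Since $\tau_{E}\subset\tau_{S^{-}}$ (each Euclidean open interval $(a,b)$ equals $\bigcup\{(a,c]:a<c<b\}$, a union of basic $\tau_{S^{-}}$-open sets), every $\tau_{E}$-closed set is $\tau_{S^{-}}$-closed, so a $\tau_{E}$-$F_{\sigma}$ set is a $\tau_{S^{-}}$-$F_{\sigma}$ set, and the corollary follows at once.

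So suppose $\mathbb{R}\setminus A=\bigcup_{n\in\mathbb{N}}C_{n}$, where each $C_{n}$ is closed in $(\mathbb{R},\tau_{A})$; note $C_{n}\subset\mathbb{R}\setminus A$ for every $n$. Let $\overline{C_{n}}$ denote the closure of $C_{n}$ in $(\mathbb{R},\tau_{E})$. The crucial step is to show that $\overline{C_{n}}\subset\mathbb{R}\setminus A$. For this, fix $a\in A$. By the definition of $\tau_{A}$ the family $\{(a-\varepsilon,a+\varepsilon):\varepsilon>0\}$ is a base of $\tau_{A}$-neighbourhoods of $a$; since $a\notin C_{n}$ and $C_{n}$ is $\tau_{A}$-closed, the $\tau_{A}$-open set $\mathbb{R}\setminus C_{n}$ contains some $(a-\varepsilon,a+\varepsilon)$, whence $(a-\varepsilon,a+\varepsilon)\cap C_{n}=\emptyset$ and so $a\notin\overline{C_{n}}$. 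Thus $A\cap\overline{C_{n}}=\emptyset$, i.e.\ $\overline{C_{n}}\subset\mathbb{R}\setminus A$.

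Now the two inclusions combine: $\mathbb{R}\setminus A=\bigcup_{n}C_{n}\subset\bigcup_{n}\overline{C_{n}}\subset\mathbb{R}\setminus A$, so $\mathbb{R}\setminus A=\bigcup_{n}\overline{C_{n}}$ is a countable union of $\tau_{E}$-closed sets, hence an $F_{\sigma}$-set in $(\mathbb{R},\tau_{E})$ and therefore in $(\mathbb{R},\tau_{S^{-}})$; Theorem~\ref{t4545} then gives that $(\mathbb{R},\tau_{A})$ is quasi-metrizable. The whole argument is short, so there is no real obstacle; the only point deserving a moment's care is the separation step above --- that an $\mathbb{R}$-point lying outside a $\tau_{A}$-closed set can be separated from it by a \emph{Euclidean} neighbourhood --- which is exactly where the local structure of $\tau_{A}$ at points of $A$ is used. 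Everything else is the trivial sandwiching of inclusions and the comparison $\tau_{E}\subset\tau_{S^{-}}$.
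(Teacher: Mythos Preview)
Your proof is correct and follows the same route as the paper: both deduce the corollary from Theorem~\ref{t4545} via the implication ``$\mathbb{R}\setminus A$ is $F_{\sigma}$ in $(\mathbb{R},\tau_{A})$ $\Rightarrow$ $\mathbb{R}\setminus A$ is $F_{\sigma}$ in $(\mathbb{R},\tau_{S^{-}})$.'' The paper asserts this implication as ``obvious'' in the sentence preceding the corollary, while you supply the actual argument---replacing each $\tau_{A}$-closed piece $C_{n}\subset\mathbb{R}\setminus A$ by its Euclidean closure $\overline{C_{n}}$ and using that points of $A$ have Euclidean neighbourhood bases to show $\overline{C_{n}}\subset\mathbb{R}\setminus A$---which is precisely the justification the paper's ``obviously'' needs (note that $\tau_{A}\subset\tau_{S^{-}}$ fails in general, so the implication is not a trivial comparison of topologies).
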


We now close this section with a result about generalized metric property of $H$-space.

\begin{theorem}\label{t111}
For an arbitrary $A\subset\mathbb{R}$, then the following statements are equivalent:
\begin{enumerate}
\item $(\mathbb{R}, \tau_{A})$ is metrizable;
\item $(\mathbb{R}, \tau_{A})$ is a $\beta$-space;
\item $\mathbb{R}\setminus A$ is countable.
\end{enumerate}
\end{theorem}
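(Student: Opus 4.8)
The plan is to prove the soft equivalence $(1)\Leftrightarrow(3)$, the trivial implication $(1)\Rightarrow(2)$, and then the substantial implication $(2)\Rightarrow(1)$; this closes the cycle. For $(1)\Leftrightarrow(3)$: the $H$-space $(\mathbb{R},\tau_A)$ is always Lindel\"of and normal, hence regular and $T_1$, and by \cite[Proposition 2.3]{Chatyrko2} it is second countable if and only if $\mathbb{R}\setminus A$ is countable. So if $\mathbb{R}\setminus A$ is countable, then $(\mathbb{R},\tau_A)$ is a second countable regular $T_1$-space and hence metrizable by Urysohn's metrization theorem, while conversely a metrizable Lindel\"of space is separable, hence second countable, so metrizability of $(\mathbb{R},\tau_A)$ forces $\mathbb{R}\setminus A$ to be countable. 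For $(1)\Rightarrow(2)$: if $\rho$ is a compatible metric, put $g(n,x)=\{y\in\mathbb{R}:\rho(x,y)<1/n\}$; whenever $x\in g(n,x_n)$ for all $n$ we get $\rho(x,x_n)<1/n$, so $x_n\to x$ and $\{x_n\}$ clusters at $x$.

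For $(2)\Rightarrow(1)$, I would feed the structural facts already available for $H$-spaces into the metrization theory of generalized ordered spaces. Assume $(\mathbb{R},\tau_A)$ is a $\beta$-space. By Proposition~\ref{p0} it is a GO-space; by Lemma~\ref{l9} it is submetrizable, and therefore has a $G_\delta$-diagonal; moreover, it is perfect by Theorem~\ref{t5} and is first countable, paracompact and Lindel\"of. A generalized ordered space with a $G_\delta$-diagonal which is a $\beta$-space must be metrizable (a classical metrization result for GO-spaces, in the spirit of the work of Lutzer and Faber on generalized ordered spaces — note the double arrow space shows the $G_\delta$-diagonal hypothesis cannot be dropped). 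Hence the $\beta$-space hypothesis forces $(\mathbb{R},\tau_A)$ to be metrizable, which is $(1)$, and then $(1)\Rightarrow(3)$ closes the argument.

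The step I expect to be the main obstacle is precisely $(2)\Rightarrow(1)$ — seeing that the rather weak $\beta$-space property already forces metrizability for these GO-spaces. A self-contained route would prove the contrapositive directly: assume $S:=\mathbb{R}\setminus A$ is uncountable and $g$ is a $\beta$-function; shrinking, we may take $g(n,x)=[x,x+\delta_n(x))$ with $\delta_n(x)$ nonincreasing in $n$ for $x\in S$. The target is a point $p\in S$ together with a strictly increasing sequence $x_n\uparrow p$ in $S$ with $\delta_n(x_n)>p-x_n$ for every $n$: then $p\in g(n,x_n)$ for all $n$, while, since $p\notin A$, no nonempty basic neighbourhood $[p,p+\varepsilon)$ of $p$ contains any $x_n$ and a short case analysis shows $\{x_n\}$ has no cluster point in $(\mathbb{R},\tau_A)$, contradicting the $\beta$-property. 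A greedy recursion produces such $x_n$ for any $p\in S$ lying outside every obstruction set $B_n:=\{q\in S:\exists\,\varepsilon>0\ \forall x\in S\cap(q-\varepsilon,q)\ \ \delta_n(x)\le q-x\}$, and, using that the reverse Sorgenfrey line $(\mathbb{R},\tau_{S^{-}})$ is hereditarily Lindel\"of, the left-condensation points of $S$ that lie in $S$ form an uncountable set; so what remains is to rule out that $\bigcup_n B_n$ covers this uncountable set. This last point is the delicate one: it cannot be extracted from condensation considerations about a single $\delta_n$ (each $B_n$ can be uncountable on its own), and genuinely exploits how the $\delta_n$ for different $n$ interact through the $\beta$-condition — which is exactly why routing the argument through the GO-space metrization machinery is the cleaner option.
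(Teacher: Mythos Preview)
Your overall architecture for $(2)\Rightarrow(1)$ is the same as the paper's---feed the GO-structure of $(\mathbb{R},\tau_A)$ together with submetrizability into generalized-metric machinery---but the step you label ``classical'' is exactly where the work lies, and you have not actually discharged it. The assertion ``a GO-space with a $G_\delta$-diagonal which is a $\beta$-space is metrizable'' is not a single citable theorem of Lutzer or Faber; note that the Sorgenfrey line already shows GO $+$ $G_\delta$-diagonal alone is far from enough, so the $\beta$-hypothesis has to be genuinely exploited, and you give no argument or precise reference for how. Your alternative direct construction is, as you yourself say, incomplete at the decisive point (controlling $\bigcup_n B_n$).

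The paper closes this gap by making the chain explicit and by proving $(2)\Rightarrow(3)$ rather than $(2)\Rightarrow(1)$. From the $\beta$-hypothesis together with paracompactness and submetrizability (Lemma~\ref{l9}) one gets semi-stratifiability via \cite[Theorem~7.8(ii)]{Gr}; the GO-structure (Proposition~\ref{p0}) then upgrades this to stratifiability by \cite[Theorems~5.16 and 5.21]{Gr}, hence to a $\sigma$-space by \cite[Theorem~5.9]{Gr}. Separability now forces a countable network, and the punchline is the paper's own Proposition~\ref{p1}: for any subspace $B$ of the Sorgenfrey line one has $nw(B)\ge|B|$, so $\mathbb{R}\setminus A$ must be countable. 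In other words, the paper does not try to land on metrizability abstractly; it lands on a countable network and then invokes a Sorgenfrey-specific cardinality bound that you did not use. If you want to keep your route through metrizability, you would still need essentially this chain (semi-stratifiable $\Rightarrow$ stratifiable for GO-spaces) plus an additional argument that a stratifiable GO-space is metrizable---none of which is supplied by the phrase ``in the spirit of Lutzer and Faber.''
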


\begin{proof}
Obviously, it suffices to prove (2) $\Rightarrow$ (3). Assume that $(\mathbb{R}, \tau_{A})$ is a $\beta$-space. Since $(\mathbb{R}, \tau_{A})$ is a paracompact submetrizable space, it follows from \cite[Theorem 7.8 (ii)]{Gr} that $(\mathbb{R}, \tau_{A})$ is semi-stratifiable. By Proposition~\ref{p0} and \cite[Theorems 5.16 and 5.21]{Gr}, $(\mathbb{R}, \tau_{A})$ is a stratifiable space, hence $(\mathbb{R}, \tau_{A})$ is a $\sigma$-space by \cite[Theorem 5.9]{Gr}. Then $(\mathbb{R}, \tau_{A})$ has a countable network since $(\mathbb{R}, \tau_{A})$ is separable, hence $\mathbb{R}\setminus A$ has a countable network. Therefore, it follows from Proposition~\ref{p1} that $\mathbb{R}\setminus A$ must be countable.
\end{proof}

\maketitle
\section{Open questions}
It is well known that $(\mathbb{R}, \tau_{E})\times (\mathbb{R}, \tau_{E})$ is Lindel\"{o}f, and $(\mathbb{R}, \tau_{S})\times (\mathbb{R}, \tau_{S})$ is not Lindel\"{o}f, hence it is natural to pose the following question.

\begin{question}\label{q0}
For an arbitrary subset $A$ of $\mathbb{R}$, are the following statements equivalent?
\begin{enumerate}
\item $(\mathbb{R}, \tau_{A})\times (\mathbb{R}, \tau_{A})$ is Lindel\"{o}f;

\item $(\mathbb{R}, \tau_{A})\times (\mathbb{R}, \tau_{A})$ is normal;

\item $(\mathbb{R}, \tau_{A})$ is metizable.
\end{enumerate}
\end{question}

The following example gives a negative answer to Question~\ref{q0} under the assumption of CH.

\begin{example}
Under the assumption of CH, there exists a subspace $A\subset\mathbb{R}$ such that $\mathbb{R}\setminus A$ is uncountable and $(\mathbb{R}, \tau_{A})\times (\mathbb{R}, \tau_{A})$ is Lindel\"{o}f.
\end{example}

\begin{proof}
By \cite[Theorem 3.4]{D.K. Burke}, there exists an uncountable subset $Y\subset \mathbb{S}$ such that $Y^{2}$ is Lindel\"{o}f. Put $A=\mathbb{R}\setminus Y$. Then $(\mathbb{R}, \tau_{A})\times (\mathbb{R}, \tau_{A})$ is Lindel\"{o}f. Indeed, it is obvious that $$(\mathbb{R}, \tau_{A})\times (\mathbb{R}, \tau_{A})=(A\times A)\cup (A\times Y)\cup(Y\times A)\cup(Y\times Y).$$ Since $A$ is a separable metrizabale space, the subspace $A\times A$, $A\times Y$ and $Y\times A$ are Lindel\"{o}f. Therefore, $(\mathbb{R}, \tau_{A})\times (\mathbb{R}, \tau_{A})$ is Lindel\"{o}f.
\end{proof}

By Theorem~\ref{t6}, we have the following question.

\begin{question}
If $(\mathbb{R}, \tau_{A})$ is $\sigma$-compact, is $A$ a scattered subspace?
\end{question}

The following question was posed by Boaz Tsaban.

\begin{question}
When is the space $(\mathbb{R}, \tau_{A})$ Menger (Hurewicz) for any $A\subset\mathbb{R}$?
\end{question}

\end{document}